\numberwithin{equation}{section}
\theoremstyle{definition}
\newtheorem{rem}{Remark}[section]
\theoremstyle{definition}
\newtheorem{defi}{Definition}[section]
\theoremstyle{definition}
\newtheorem{lem}{Lemma}[section]
\theoremstyle{definition}
\newtheorem{cor}{Corollary}[section]
\theoremstyle{definition}
\newtheorem{theo}[lem]{Theorem}
\theoremstyle{definition}
\tikzset{
	state/.style={circle,draw,minimum size=6ex},
	arrow/.style={-latex, shorten >=0.1ex, shorten <=0.1ex}}
\begin{document}

\title{Feedback Stabilization for a coupled PDE-ODE Production System}

\author{Vanessa Baumgärtner\footnotemark[1], \; Simone  Göttlich\footnotemark[1], \; Stephan Knapp\footnotemark[1]}
\renewcommand*{\thefootnote}{\fnsymbol{footnote}}

\footnotetext[1]{University of Mannheim, Department of Mathematics, 68131 Mannheim, Germany (goettlich@uni-mannheim.de).}

\date{March 26, 2019}

\maketitle

\begin{abstract}
We consider an interlinked production model consisting of conservation laws (PDE) coupled to ordinary differential equations (ODE).
Our focus is the analysis of control laws for the coupled system and corresponding stabilization questions of equilibrium dynamics in the 
presence of disturbances. These investigations are carried out using an appropriate Lyapunov function on the theoretical and numerical level. 
The discrete $L^2-$stabilization technique allows to derive a mixed feedback law that is able to ensure exponential stability 
also in bottleneck situations. All results are accompanied by computational examples.
\end{abstract}

\noindent
{\bf AMS Classification:} 65Mxx, 93D05, 90B30\\ 
{\bf Keywords:} Feedback stabilization, Lyapunov function, coupled PDE-ODE system

\section{Introduction}
Mathematical models to describe production systems have gained a lot of attention
during the past few decades. In particular, fluid-like models based on scalar hyperbolic conservation laws (PDE)
have been developed and analyzed, see for example 
\cite{ArmbrusterDegondRinghofer2006aa,ArmbrusterRinghofer2005aa,Bretti2007,CoronKawskiWang2010aa,CoronWang2012aa,DApice2010,DApice2006}. 
The extension to networks has been originally introduced in \cite{Goettlich2006}, wherein, as coupling conditions, queues in terms of
ordinary differential equations (ODE) have been installed. This approach immediately leads to a coupled PDE-ODE dynamics, see \cite{DApice2010} 
for more details. 

In this article, we are concerned with the stabilization of such a coupled PDE-ODE system.
The dynamics within each processing unit is given by the linear advection equation with positive velocity while
queues in front of the unit are introduced to avoid congestion. Therefore, the queues measure the difference between the in-and outflow
into a processing unit and therefore obey a nonlinear ordinary differential equation.  
In recent years, the stability of linear coupled systems \cite{Barreau,Prieur,Krstic2008,Tang2011,Wu2014} has been investigated intensively.
However, there is only little literature available on the stability analysis for nonlinear coupled systems, see for example \cite{Wu2013a}.
Therefore, our purpose is to focus on the stability analysis of the coupled PDE-ODE production system from a theoretical as well as
discrete (or numerical) point of view. 
Due to the fact that the coupled PDE-ODE system can be interpreted as a coupled boundary value problem,
stabilization results for hyperbolic equations can be applied and extended, see \cite{Bastin2016,Coron2007aa,Li2010aa}.
Therein, analytical results for sufficiently smooth solutions in connection with boundary control have been obtained. 
The underlying tools for the study of those problems are Lyapunov functions stabilizing the deviation from steady states in suitable norms, e.g.~$L^2$ in space. Exponential decay of a continuous Lyapunov function under a so-called dissipative boundary condition has been proven in \cite{L4,CorondAndrea-NovelBastin2007aa}.
Also, explicit decay rates for numerical schemes have only recently been established. In \cite{Banda2013}, exponential decay on a finite time horizon has been developed for non-conservative (first-order) schemes and in~\cite{Goettlich2017} for discretizations of linear systems only.  
Different to the already existing results for linear feedback laws, we derive a mixed feedback law based on the numerical discretization, 
that is capable to tackle instances, where queues are still filled and hence the ODE affects the stability behavior.

This paper is organized as follows: in Section \ref{sec:prod}, we introduce the production model that presented in \cite{DApice2010} and provide a suitable numerical discretization. We make use of the theory of Lyapunov functions (Section \ref{sec:lyap}) to derive a feedback law that ensures the exponential stability for the coupled PDE-ODE model in Section \ref{sec:analytical}. The theoretical results are accompanied by various simulation results. 
Since we are also interested in the exponential stability of the numerical solution, we prove its exponential stability and additionally provide decay rates in Section \ref{sec:num}. On the discrete level, we obtain a so-called mixed feedback law to deal with queueing situations, i.e. non-empty queue loads.

\section{Model Equations and Numerical Discretization}
\label{sec:prod}
First, we recall the production model taken from \cite{DApice2010}. For simplicity, we stick to the special case of a serial system without any branches. The model considers a queue in front of every processing unit $e\in \{1,...,m\}$. Furthermore, the maximal capacity $\mu_e>0$ and the processing velocity $v_e>0$ of each unit are constant parameters. 
The dynamics of the product density $\rho_e(t,x)$ defined on a segment $[a_e,b_e]$ follows \cite{ArmbrusterDegondRinghofer2006aa}:
\begin{subequations}
	\begin{equation}
	\partial_t\rho_e(t,x)+\partial_xf(\rho_e(t,x))=0,
	\label{eq:21a}
	\end{equation}
	\begin{equation}
	f_e(\rho_e(t,x))=\min\{{v}_e\rho_e(t,x),\mu_e\},~~~~\forall x\in[a_e,b_e], \, t\in\mathbb{R_+},
	\end{equation}
	\begin{equation}
	\rho_e(0,x)=\rho_{e,0}(x),
	\end{equation}
\end{subequations}
where $f_e$ denotes the flux function. We define the inflow $g_{in,e}$ at $x=a_e$ in front of processing unit $e$ as the outflow of the predecessor:
\begin{equation}
g_{in,e}(t)\coloneqq f_{{e-1}}(\rho_{{e-1}}(t,b_{e-1})),~\text{for~}e=2,...,m.
\label{eq:inflow}
\end{equation}
In the case $e=1$, an externally given inflow profile $g_{in,1}(t)=f_{in}(t)$ is assumed.
Due to the possibility of different maximal capacities $\mu_e$, it might occur that the inflow 
can not be processed immediately. Therefore, we introduce a time-dependent function $q_e(t)$ describing the load of the queue at processing unit $e$.
Each queue satisfies the following ordinary differential equation:
\begin{align}
\partial_tq_e(t)=g_{in,e}(t)-g_{out,e}(t),~\text{with~}g_{out,e}(t) \coloneqq f_e(\rho_e(t,a_e)) \label{eq:queue}
\end{align}
describing the outflow from queue $e$ to processor $e$ at time $t$, cf. Figure \ref{img:ginout}. 
\begin{figure}[H]
	\centering
	\begin{tikzpicture}[
	decoration={markings,mark=at position 1 with {\arrow[scale=1,black]{latex}};}]
	\draw  (0,0) circle (3mm) node {$q_1$};
	\draw [postaction={decorate}] (0.3,0) -- (3.3,0); 
	\draw  (3.6,0) circle (3mm) node (v1) {$q_2$};
	\node (d1) [draw=none,fill=none] at (3.9,0) {~};
	\path[->] (d1) edge [loop above] node {~}(d1);
	\node (d2) [draw=none,fill=none] at (3.2,0) {~};
	\path[->] (d2) edge [loop above] node {~}(v1);
	\draw [postaction={decorate}] (3.9,0) -- (6.9,0);
	\coordinate[label=above:1] ()  at (1.8,0);
	\coordinate[label=above:2] ()  at (5.4,0);
	\coordinate[label=above:$g_{out,2}$] ()  at (4.0,0.5);
	\coordinate[label=above:$g_{in,2}$] ()  at (3.0,0.5);
	\end{tikzpicture}
	\caption{Serial production system with two processors and two queues}
	\label{img:ginout}
\end{figure}
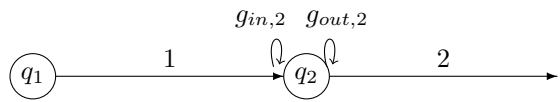

Obviously, the outgoing flux $g_{out,e}$ depends on the queue load. 
So if the queue is empty, the outflow is the minimum of the incoming flux $g_{in,e}$ and the maximum processing capacity $\mu_e$. In the first case, the queue remains empty, whereas in the second case, the queue starts to increase. In the non-empty case, the outgoing flux is equal to the maximal capacity. Summarizing, we get
\begin{equation}
g_{out,e}(t)\coloneqq f_e(\rho_e(t,a_e))=\begin{cases}
\min\{f_{{e-1}}(\rho_{{e-1}}(t,b_{e-1})),\mu_e\},&q_e(t)=0,\\
\mu_e,&q_e(t)>0.
\end{cases}
\label{eq:queuecoup}
\end{equation}
Equations \eqref{eq:queue} and \eqref{eq:queuecoup} ensure that the production model is well-defined, provided that $f_{in}(t)$ and $\rho_{e,0}(x)$ are of total bounded variation, see \cite{DApice2010}.

To avoid discontinuities in the derivative of the queue length, a smoothed out version of \eqref{eq:queuecoup} has been derived in \cite{Armbruster} and reads as
\begin{equation}
f_e(\rho_e(t,a_e))\approx\min\left\{\mu_e,\frac{q_e(t)}{\epsilon}\right\},~\text{with}~\epsilon\ll1.
\label{eq:approx}
\end{equation}
Provided ${v}_e\rho_{e,0}\leq\mu_e$, \eqref{eq:21a} reduces to a linear advection equation
\begin{equation*}
\partial_t\rho_e+v_e\partial_x\rho_e=0.
\end{equation*} 
For the numerical investigations later on, we introduce a first-order discretization of the coupled production model. 
From now on, we assume that $x\in[0,l]$, where $l$ is the uniform length of all processors. 
It is advantageous to rewrite the model in terms of fluxes only to apply the Lyapunov stability concepts in Section \ref{sec:lyap}.
By defining 
\begin{align*}
\Lambda=\text{diag}({v}_1,\ldots,{v}_m), \quad f(t,x)=(f_1(t,x),\ldots,f_m(t,x))^T , \quad \partial_tq(t)=(\partial_tq_1(t),\ldots,\partial_tq_m(t))^T,\\ 
 g_{in}(t)=(g_{in,1}(t),\ldots,g_{in,m}(t))^T \text{ and }\quad g_{out}(t)=(g_{out,1}(t),\ldots,g_{out,m}(t))^T, \hspace*{0.08\textwidth}
\end{align*}
we are able to write the model in the following compact form:
\begin{subequations}
	\begin{equation}
	\partial_tf(t,x)+\Lambda\partial_xf(t,x)=0,~x\in[0,l],\, t\in\mathbb{R_+},
	\label{eq:fcompact1}
	\end{equation}
	\begin{equation}
	\partial_tq(t)=g_{in}(t)-g_{out}(t),
	\label{eq:fcompact2}
	\end{equation}
	\label{eq:fcompact}
\end{subequations}
equipped with initial values $f_{e,0}(x)=v_e\rho_{e,0}(x),~q(0)=q_{0}\geq0$. Note that \eqref{eq:fcompact1} is already given in characteristic form, see \cite{Bastin2016}.
 
For the discretization of \eqref{eq:fcompact1} we apply the left-sided upwind method due to the strictly positive velocities $v_e$.
The equation for the queues is approximated by an explicit Euler scheme as proposed in \cite{DApice2010}. 
We use an equidistant grid with constant time step size $\tau$ and constant space step size $h$. To describe the boundary values, we use one ghost cell at the left. We consider $N+1$ cells with cell centers $x_j=(j+\frac{1}{2})h$ for $j\in\{-1,\ldots,N-1\}$. The interfaces of the cells are located at $x_{j-\frac{1}{2}}=jh$ for $j\in\{-1,...,N-1\}$. The considered time domain is $[0,T]$, whereby the discrete time is denoted by $t_{k}=k\tau$ for $k\in\{0,...,K-1\}$ such that $(K-1)\tau=T$. Moreover, the CFL condition is assumed to be fulfilled, i.e.,
\begin{equation}
\bar{v}\frac{\tau}{h}\leq 1,~~~ \bar{v}\coloneqq\max_{e\in\{1,...,m\}}{v}_e.
\label{eq:cfl1}
\end{equation}

For further investigations, we assume that 
$q_1^k=0~\forall k$, which is satisfied if $f_{in}^k\leq\mu_1~\forall k$. 
Then, the discretized equations for $j\in\{0,...,N-1\}$ and $k\in\{0,...,K-2\}$ read:
\begin{subequations}
	\begin{align}
	f_{e,j}^{k+1}&=f_{e,j}^k-\frac{\tau}{h}{v}_e\left(f_{e,j}^k-f_{e,j-1}^k\right),&\text{for}~e\in\{1,...,m\},\label{eq:d1}\\
	f_{e,-1}^k&={g_{out,e}}^k=\begin{cases}
	\min\{f_{e-1,N-1}^k,\mu_e\},~~~q_e^k=0,\\
	\mu_e,~~~~~~~~~~~~~~~~~~~~~~~~q_e^k>0,
	\end{cases}&\text{for}~e\neq1,\label{eq:d2}\\
	f_{1,-1}^k&={g_{out,1}}^k=f_{in}^k,\label{eq:d3}\\
	q_{e}^{k+1}&=q_{e}^k+\tau({g_{in,e}}^k-{g_{out,e}}^k),~\text{with}&\text{for}~e\in\{1,...,m\},\label{eq:d4}\\
	\nonumber {g_{in,e}}^k&=\begin{cases}f_{e-1,N-1}^k,~e\neq1, \\
	f_{in}^k,~~~~~~~~~\text{else},\end{cases}\\
	f_{e,j}^0& = f_{0,e},~q_{e}^0= q_{0,e},&\text{for}~e\in\{1,...,m\}.\label{eq:d5}
	\end{align}
	\label{eq:discretization}
\end{subequations} 
Here, \eqref{eq:d2},\eqref{eq:d3} are the discretized boundary conditions that are determined by the queue or the control input. 
We note that we distinguish between the source node ($e=1$) and internal nodes ($e = 2,\dots,m$). Due to assumption $f_{in}^k\leq\mu_1~\forall k$, 
it is valid to set $g_{out,1}(t)$ equal to the inflow profile $f_{in}(t)$. The discretized production network model is referred to as PNM in the following.

\section{Feedback Stabilization for the Continuous System}

We now investigate the stabilization of the continuous model \eqref{eq:fcompact}. In this section, starting with the Lyapunov stability, we discuss results for a serial network consisting of only two processors and provide numerical illustrations. We remark that the extension to more processing units is
straightforward.
\subsection{Lyapunov Stability}
\label{sec:lyap}
In the following, we analyze the exponential stability of the continuous system \eqref{eq:fcompact}. 
In contrast to \cite{Bastin2016,Bastin2007}, we are faced with queues and therefore investigate the exponential stability in the norm
\begin{equation}
\lVert(f(t,\cdot),q(t))\rVert=\sqrt{\lVert f(t,\cdot)\rVert_{L^2((0,l);\mathbb{R}^m)}^2+|q(t)|^2},
\label{eq:newnorm}
\end{equation}
where $|\cdot|$ is the Euclidean norm. The introduced norm \eqref{eq:newnorm} is the natural choice of the product space $L^2((0,l);\mathbb{R}^m)\times \mathbb{R}^m$ motivated by \cite{Krstic2008,Tang2011}.
\begin{defi}
	The system \eqref{eq:fcompact1}-\eqref{eq:fcompact2} is exponentially stable in the sense of the norm \eqref{eq:newnorm}, if there exist $\nu>0$ and $C>0$ such that for every initial condition $f_0\in L^2((0,l);\mathbb{R}^m)$ and $q(0)\in \mathbb{R}^m$ the solution to \eqref{eq:fcompact} satisfies
	\begin{equation}
	\lVert f(t,\cdot)\rVert_{L^2((0,l);\mathbb{R}^m)}^2+|q(t)|^2\leq Ce^{-\nu t}\left(\lVert f(0,\cdot)\rVert_{L^2((0,l);\mathbb{R}^m)}^2+|q(0)|^2\right),~\forall t\in\mathbb{R}_+.
	\label{eq:exponentialstability1}
	\end{equation}	
	\label{defi:exp}
\end{defi}
Note that we are particularly interested in the stabilization of the trivial steady state $f\equiv0$ and $q\equiv0$. Nevertheless, the model also allows for non-trivial states, which read as
$f \equiv z$, $q \equiv 0$, where
\begin{equation*}
z=\left(z_1, \dots,z_m\right),~\text{with}~z_e=\zeta\min_e(\mu_e)~\text{for}~0\leq\zeta<1.
\end{equation*}
By translation $\tilde{f}=f-z$, we recover results for the non-trivial steady states from the trivial steady state again.
For the feedback analysis we make use of a Lyapunov function approach. Since we consider a serial system with queues, the Lyapunov function already known for 
hyperbolic equations (see e.g. \cite{Bastin2016,Diagne2012,Goettlich2017}) needs to be adapted. We define the following {\em Lyapunov function candidate}:
\begin{align}
\nonumber V(t)&=V_1(t)+V_2(t)\\
&=\int_0^l f^T(t,x)P(x)f(t,x)~dx+q^T(t)Q(t)q(t),~
\label{eq:lyap}
\end{align} with the weighting matrices
$P(x)=\text{diag}(p_{1}e^{-\eta _1x},...,p_{m}e^{-\eta_mx})$, $p_{e}>0,\eta_e>0$ and\\$Q(t)=\text{diag}(c_{1}e^{-\tilde{\eta}_1 {v}_1t},\ldots,c_{m}e^{-\tilde{\eta}_m{v}_mt})$, $c_{e}>0,\tilde{\eta}_e>0$. In order to include the ODEs for the queues, we add the quadratic form $V_2(t)=q(t)^TQ(t)q(t)$. If not stated otherwise, we assume $\eta_e=\eta$ and $\tilde{\eta}_e=\tilde{\eta}$, independent of the individual properties. 

\begin{rem}
For the discrete stability analysis in Section \ref{sec:num}, we define the {\em discrete version of the Lyapunov function} in \eqref{eq:lyap} as
\begin{equation}
V^k=\sum\limits_{j=0}^{N-1}\sum\limits_{e=1}^{m}(f_{e,j}^k)^2p_{e}e^{-\eta_e x_{e,j}}h+\sum\limits_{e=1}^{m}(q_{e}^k)^2c_{e}e^{-\tilde{\eta}_e{v}_et_k},~~~\text{with}~\eta_e,\tilde{\eta}_e,p_e,c_e>0.
\label{eq:disclya1}
\end{equation}
\end{rem}

\subsection{Analytical Feedback Stabilization}
\label{sec:analytical}
The goal is to derive a feedback law using the Lyapunov function from \eqref{eq:lyap}. 
Figure \ref{img:ginout1} illustrates the key idea of such a closed-loop system, where
a percentage of the outflow $f_2(t,l)$ is fed back into the system.
We aim to derive a control input $u_1(t)$ that depends on the outflow of the system such that exponential stability is ensured. 
We call this control input a {\em feedback law}.
\begin{figure}[h]
	\centering
	\begin{tikzpicture}[
	decoration={markings,mark=at position 1 with {\arrow[scale=1,black]{latex}};}]
	\tikzstyle{place}=[circle,draw=black!50,fill=white!20,thick]
	\tikzstyle{transition}=[rectangle,draw=black!50,fill=black!20,thick]
	\draw  (0,0) circle (3mm) node (v0) {$q_1$};
	\draw [postaction={decorate}] (0.3,0) -- (3.3,0); 
	\draw  (3.6,0) circle (3mm) node (v1) {$q_2$};
	\node (d1) [draw=none,fill=none] at (3.9,0) {~};
	\draw [postaction={decorate}] (3.9,0) -- (6.9,0);
	\coordinate[label=above:1] ()  at (1.8,0);
	\coordinate[label=above:2] ()  at (5.4,0);
	\node (d4) [draw=none,fill=none] at (-0.3,-0.2) {~};
	\node (d3) [draw=none,fill=none] at (7.0,0) {~};
	\draw [arrow,dashed,bend angle=25,bend left]  (d3) to (d4);
	\node (d2) [draw=none,fill=none] at (7.0,0.3) {$f_2(t,l)$};
	\node (d3) [draw=none,fill=none] at (3.5,-1.4) {$u_1(t)$};
	\node (d4) [draw=none,fill=none] at (-0.7,-0.5) {$g_{in,1}(t)$};
	\end{tikzpicture}
	\caption{Feedback loop with control input $u_1(t)$}
	\label{img:ginout1}
\end{figure}
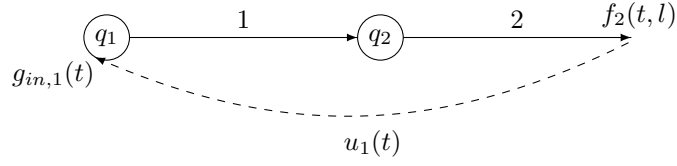
\label{ex:ex2}

\begin{rem}
By considering a control input $u_1(t)$ as an inflow into the system, equations (\ref{eq:d3}) and (\ref{eq:d4}) of the numerical discretization are adapted as follows:
\begin{subequations}
\begin{align}
f_{1,-1}^k&={g_{out,1}}^k=u_{1}^k,\label{eq:d31}\\
q_{e}^{k+1}&=q_{e}^k+\tau({g_{in,e}}^k-{g_{out,e}}^k),~\text{with}&\text{for}~e\in\{1,...,m\},\label{eq:d41}\\
\nonumber {g_{in,e}}^k&=\begin{cases}f_{e-1,N-1}^k,~e\neq1,\\ 
u_{1}^k,~~~~~~~~~~\text{else}.\end{cases}
\end{align}
\end{subequations}
\end{rem}

Since so far, only the non-queue case has been studied in the literature \cite{Bastin2016,Bastin,Bastin2007}. However, 
difficulties arise when queues start to grow up. Therefore, we need to adapt the already established results accordingly.
The following lemma presents a condition that ensures the exponential decay of the Lyapunov function \eqref{eq:lyap}.
\begin{lem}
	The continuous solution satisfies
	\begin{equation*}
	\dot{V}\leq-\nu V,
	\end{equation*}
	i.e., exponential decay of $V$ is obtained, if 
	\begin{equation}
	-f(t,l)^T\Lambda P(l)f(t,l)+f(t,0)^T\Lambda P(0)f(t,0)+2\partial_tq(t)^TQ(t)q(t)\leq0
	\label{eq:condlemma}
	\end{equation}
	is fulfilled $\forall t \in\mathbb{R_+}$. 
	The decay rate $\nu$ is given by
	\begin{equation}
	\nu=\min(\eta,\tilde{\eta})\min_e({v}_e)>0.
	\end{equation}
	\label{lem:conditionu}
\end{lem}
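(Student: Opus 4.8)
The plan is to differentiate the candidate \eqref{eq:lyap} term by term and to show that, once the boundary and queue contributions collected in \eqref{eq:condlemma} are discarded, the exponential weights in $P(x)$ and $Q(t)$ produce exactly the claimed decay. I would treat the PDE part $V_1$ and the queue part $V_2$ separately. For $V_1$, since $P(x)$ is time independent I differentiate under the integral, $\dot V_1 = \int_0^l 2 f^T P\,\partial_t f\,dx$, and substitute the characteristic form $\partial_t f = -\Lambda\partial_x f$ from \eqref{eq:fcompact1}. Because $P$ and $\Lambda$ are diagonal, they commute and $\Lambda P$ is symmetric, so $2f^T\Lambda P\,\partial_x f = \partial_x\!\left(f^T\Lambda P f\right) - f^T\Lambda(\partial_x P)f$. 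Integrating the total derivative gives the boundary contribution
\begin{equation*}
-f(t,l)^T\Lambda P(l)f(t,l) + f(t,0)^T\Lambda P(0)f(t,0),
\end{equation*}
which is precisely the first two terms of \eqref{eq:condlemma}. The remaining volume term is nonpositive: with $\eta_e=\eta$ one has $\partial_x P = -\eta P$, hence $\int_0^l f^T\Lambda(\partial_x P)f\,dx = -\eta\int_0^l f^T\Lambda P f\,dx \le -\eta\,\min_e(v_e)\,V_1$, using that $f^T\Lambda P f\ge\min_e(v_e)\,f^TPf$ on the pointwise positive integrand.

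For $V_2=q^TQq$ the product rule and the symmetry of $Q$ give $\dot V_2 = 2(\partial_t q)^TQ q + q^T\dot Q q$, where the first summand is the last term of \eqref{eq:condlemma}. Since $\dot Q = -\tilde\eta\,\text{diag}(v_e)\,Q$ (this is where the factor $v_e$ in the exponent of $Q$ enters), the second summand equals $-\tilde\eta\sum_e v_e(Q)_{ee}q_e^2\le-\tilde\eta\,\min_e(v_e)\,V_2$. Adding the two parts, the three terms of the assumption \eqref{eq:condlemma} group together and are $\le 0$, leaving $\dot V \le -\eta\min_e(v_e)V_1 - \tilde\eta\min_e(v_e)V_2 \le -\nu(V_1+V_2)$ with $\nu=\min(\eta,\tilde\eta)\min_e(v_e)$, which is the claim.

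The computation itself is essentially bookkeeping; the step I expect to carry the real content is the integration by parts for $V_1$, where one must exploit the symmetry of $\Lambda P$ to merge the two $f^T\Lambda P\,\partial_x f$ contributions and to generate exactly the boundary quadratic forms appearing in \eqref{eq:condlemma}. The only genuinely structural point is the choice of weights: the exponent $-\tilde\eta_e v_e t$ in $Q(t)$ is tuned so that $\dot Q$ reproduces the same $v_e$-weighting as $\Lambda\,\partial_x P$ in the PDE estimate, which is what allows a single uniform factor $\min_e(v_e)$ to be extracted from both parts and yields the common rate $\nu$. A minor regularity caveat is that differentiating under the integral and integrating by parts requires the sufficiently smooth solutions already assumed in this setting.
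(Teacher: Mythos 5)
Your proposal is correct and follows essentially the same route as the paper: differentiate the candidate, substitute the characteristic form $\partial_t f=-\Lambda\partial_x f$, extract the total $x$-derivative to produce the boundary quadratic forms, use $\partial_x P=-\eta P$ and $\partial_t Q=-\tilde{\eta}\Lambda Q$ for the volume/queue terms, and bound both by $\min_e(v_e)$ to obtain $\nu=\min(\eta,\tilde{\eta})\min_e(v_e)$. The only difference — treating $V_1$ and $V_2$ separately rather than in one combined computation — is purely organizational.
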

\begin{proof}
	In the following, we assume that $f$ and $q$ are sufficiently smooth. We obtain:
	\begin{equation}
	\begin{array}[b]{r@{}c@{}l}
	\nonumber\dot{V} & \overset{\eqref{eq:lyap}}{=} &\int_{0}^{l}\left(\partial_tf^TP(x)f+f^TP(x)\partial_tf\right)dx+2\partial_tq^TQ(t)q+q^T\partial_tQ(t)q\\ [1ex]
	\nonumber&\overset{\eqref{eq:fcompact1}}{=} & \int_{0}^{l}\left(-\partial_xf^T\Lambda P(x)f-f^TP(x)\Lambda\partial_xf\right)dx+2\partial_tq^TQ(t)q-\tilde{\eta}q^T\Lambda Q(t)q\\ [1ex]
	\nonumber&= & -\int_{0}^{l}\partial_x\left[f^T\Lambda P(x)f\right]dx-\eta\int_{0}^{l}f^T\Lambda P(x)fdx+2\partial_tq^TQ(t)q-\tilde{\eta}q^T\Lambda Q(t)q\\ [1ex]
	\nonumber&\overset{}{\leq}&-\left[f^T\Lambda P(x)f\right]_0^l-\eta\min_e({v}_e)\int_{0}^{l}f^T P(x)fdx+2\partial_tq^TQ(t)q-\tilde{\eta}\min_e({v}_e)q^TQ(t)q\\ [1ex]
	&\leq&-\min(\eta,\tilde{\eta})\min_e({v}_e)V(t)-\left[f^T\Lambda P(x)f\right]_0^l+2\partial_tq^TQ(t)q.
	\end{array}
	\end{equation}
	To obtain the exponential decay of $V$, we deduce the condition
	\begin{equation}
	-\left[f^T\Lambda P(x)f\right]_0^l+2\partial_tq^TQ(t)q\leq0. \qedhere
	\label{eq:conditionu}
	\end{equation}
\end{proof}


Next, we derive a feedback law by using Lemma \ref{lem:conditionu} and by imposing the feedback control $u_1(t)$ at processor 1.
For the serial network in Figure \ref{img:ginout1} we have to show
\begin{align*}
& -{v}_1f_1(t,l)^2p_1e^{-\eta l}+{v}_1f_1(t,0)^2p_1e^{-\eta0}-{v}_2f_2(t,l)^2p_2e^{-\eta l}+{v}_2f_2(t,0)^2p_2e^{-\eta 0}\\[1ex]
&+2q_1(t)(g_{in,1}(t)-g_{out,1}(t))c_1e^{-\tilde{\eta}{v}_1t}+2q_2(t)(g_{in,2}(t)-g_{out,2}(t))c_2e^{-\tilde{\eta}{v}_2t}\leq0.
\end{align*}
Due to the assumption $q_1(t)=0$, the outflow into processor 1 is equal to the control input $u_1(t)$, i.e., $f_1(t,0)=u_1(t)$. This leads to
\begin{align*}
{v}_1u_1(t)^2p_1&\leq {v}_1f_1(t,l)^2p_1e^{-\eta l}+{v}_2f_2(t,l)^2p_2e^{-\eta l}\\
& -{v}_2f_2(t,0)^2p_2-2q_2(t)g_{in,2}(t)c_2e^{-\tilde{\eta}{v}_2t}+2q_2(t)g_{out,2}(t)c_2e^{-\tilde{\eta}{v}_2t}.
\end{align*}
In the next step, we set $p_1=p_2=c_1=c_2=1$ and use the approximate version of $g_{out,2}(t)=f_2(t,0)$ in \eqref{eq:approx} for $\epsilon\ll1$ to ensure enough regularity:
\begin{align}
\nonumber u_1(t)^2&\leq f_1(t,l)^2e^{-\eta l}+\frac{{v}_2}{{v}_1}f_2(t,l)^2e^{-\eta l}-\frac{{v}_2}{{v}_1}\min\left\{\frac{q_2(t)}{\epsilon},\mu_2\right\}^2\\
&-2\frac{1}{{v}_1}q_2(t)f_1(t,l)e^{-\tilde{\eta}{v}_2t}+2\frac{1}{{v}_1}q_2(t)\min\left\{\frac{q_2(t)}{\epsilon},\mu_2\right\}e^{-\tilde{\eta}{v}_2t}\eqqcolon X(t).
\label{eq:quadrat}
\end{align}
Thus, the control input $u_1(t)$ is bounded from above by $X(t)$. This is only valid if $X(t)$ is non-negative. In the steady state $f\equiv0$ and $q\equiv0$, the control $u_1=0$ is obtained.
Assuming ${v}_e={v}$ for $e\in\{1,2\}$, we get:
\begin{align}
\nonumber X(t)&= f_1(t,l)^2e^{-\eta l}+f_2(t,l)^2e^{-\eta l}-\min\left\{\frac{q_2(t)}{\epsilon},\mu_2\right\}^2\\
&-2\frac{1}{{v}}q_2(t)f_1(t,l)e^{-\tilde{\eta}{v}t}+2\frac{1}{{v}}q_2(t)\min\left\{\frac{q_2(t)}{\epsilon},\mu_2\right\}e^{-\tilde{\eta}{v}t}.
\label{eq:X_t}
\end{align}
Assuming ${v}_e={v}$ and $p_e=p=c_e=c=1$, this result can be extended to a serial system with $m$ processors:  
\begin{align}
\nonumber X(t)&=\sum\limits_{e=1}^{m}f_e(t,l)^2e^{-\eta l}-\sum\limits_{e=2}^{m}\min\left\{\frac{q_e(t)}{\epsilon},\mu_e\right\}^2-\sum\limits_{e=2}^{m}2\frac{1}{{v}}q_e(t)f_{e-1}(t,l)e^{-\tilde{\eta}{v}t}\\
&+\sum\limits_{e=2}^{m}2\frac{1}{{v}}q_e(t)\min\left\{\frac{q_e(t)}{\epsilon},\mu_e\right\}e^{-\tilde{\eta}{v}t}.
\label{eq:Xgen}
\end{align}

If $X(t)<0$, we may set $u_1(t)=0$. Otherwise, we set $u_1(t)=\sqrt{X(t)}$ motivated by \eqref{eq:quadrat}. We observe that $u_1(t)$ depends on the values of $f$ at the boundaries and on the queue load. 
This leads to a linear feedback control, i.e., a percentage of the outflow that is fed back into the system,
\begin{equation}u_{1}(t) = \kappa f_m(t,l),\label{eq:kappa1}
\end{equation}
for $\kappa>0$. 	

In the case of vanishing queues, i.e.\ $q_e(t)=0$ for all $t\in\mathbb{R}_+$, the results from Lemma \ref{lem:conditionu} and \eqref{eq:quadrat}--\eqref{eq:kappa1} coincide with \cite{Bastin2016,Bastin2007,Goettlich2017} by a direct comparison.  

\subsection{Simulation Results}
In the following, we provide simulation results 
for the linear feedback law for queueing situations. 
If not stated otherwise, we use a time horizon of $T=50$ and $h=\tau=0.01$ for the step sizes.
All queues shall be empty at $t=0$, i.e. $q_e(0)=0$.
We set ${v}_e=1$ and the length $l$ of each processor is 1.

We consider the linear feedback control law \eqref{eq:kappa1} for three processors and two active queues. 
Our first approach to generate queues is to set the initial condition for the fluxes equal to the maximal capacity.
We choose $\mu_1=10,~\mu_2=9,~\mu_3=8$ and the initial conditions $f_1(0,x)=10$,~$f_2(0,x)=9$,~$f_3(0,x)=8$. The latter choice leads to queues at the second and third processor. The parameter for the weighting matrices of the Lyapunov function are $\eta_e=\tilde{\eta}_e=0.1$ and $p_e=c_e=1$. 
We denote the upper bound from the proof of Lemma \ref{lem:conditionu} by:
\begin{equation}
V_{up}(t)\coloneqq V(0)\exp(-\min(\eta,\tilde{\eta})\min_e(v_e)t).
\label{eq:Vup}
\end{equation}
Figure \ref{img:lyapcompl} shows that the estimated Lyapunov function for $\kappa=0.1$ has a kink and is even slightly increasing for a short time interval because of the queues in front of processor 2 and 3. 
The Lyapunov function for the queue ($V_2$) is illustrated in Figure \ref{img:lyapcomp}. 

\begin{figure}[htb!]
	\centering
	\begin{subfigure}[b]{.49\textwidth}
		\centering
		\includegraphics[width=\textwidth]{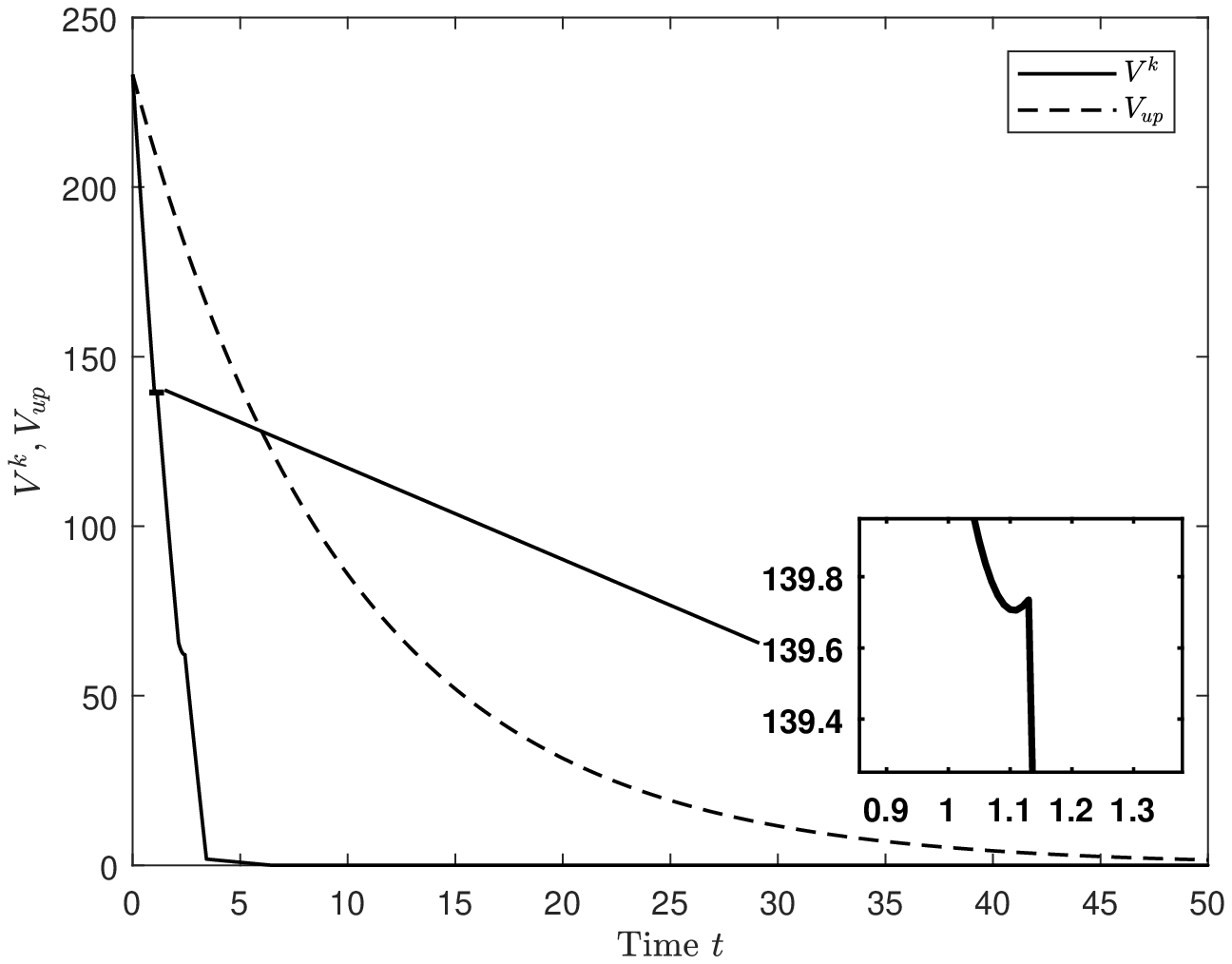}
		\caption{Discrete Lyapunov function $V^k$ \eqref{eq:disclya1} and upper bound $V_{up}$ \eqref{eq:Vup}}
		\label{img:lyapcompl}
	\end{subfigure}
	\begin{subfigure}[b]{.49\textwidth}
		\includegraphics[width=\textwidth]{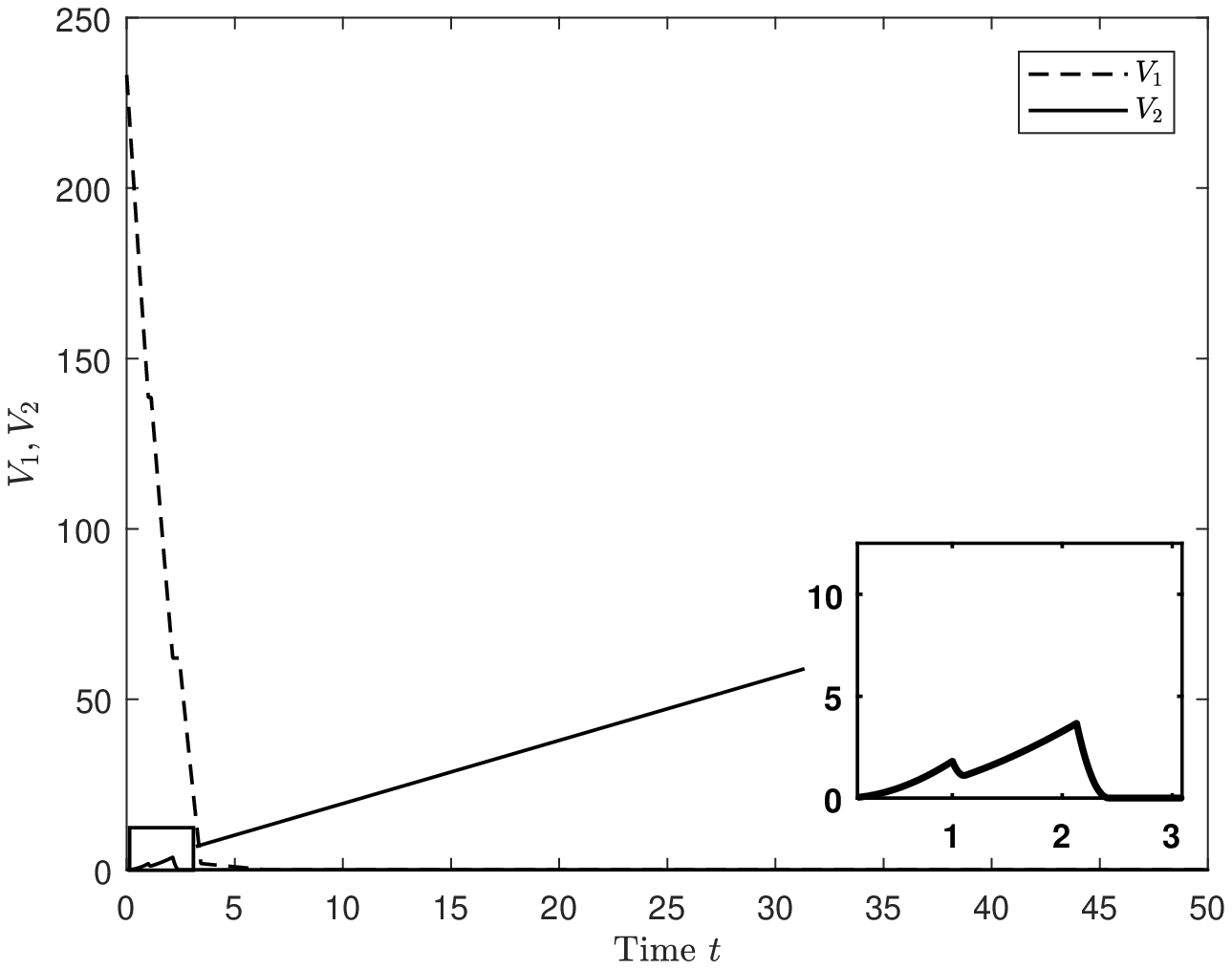}	
		\caption{Lyapunov function for the fluxes $V_1$ and for the queues $V_2$, see \eqref{eq:lyap}}
		\label{img:lyapcomp}
	\end{subfigure}
	\caption{Discrete Lyapunov function}
\end{figure}

Zooming in, we observe that this Lyapunov function decreases before it starts to increase again. This is due to the chosen initial conditions that cause queue 3 to continue growing while queue 2 already shrinks. Even though the Lyapunov function for the flux is decreasing or temporarily constant, the increase in queue 3 can not be compensated. Furthermore, we can see that the Lyapunov function for the flux is temporarily constant during two time intervals which is caused by the existence of two non-empty queues. 
Summarizing, there exist scenarios using the linear feedback law, where we are not able to ensure asymptotic stability, i.e.\ a decreasing Lyapunov function, for all time steps. However, such kinks can be avoided applying a so-called mixed feedback law, see Section \ref{sec:num}.

\section{Feedback Stabilization for the Discretized Model}
\label{sec:num}
In this section, we study how the results from our theoretical investigations can be transferred to the
discretized equations, similar to \cite{Banda2013,Goettlich2017}.
We present decay rates for the exponential stability and derive a new type of feedback law
that is able to avoid increasing Lyapunov functions in case of non-empty queues.

Considering a discrete Lyapunov function \eqref{eq:disclya1} also requires the definition of a discrete norm. 
Therefore, we use for the stability analysis of the disctrized equations
\begin{equation}
\lVert(f,q)\rVert_h=\sqrt{\lVert f\rVert_{h,2}^2+|q|^2},~~~~~(f,q)\in\mathbb{R}^{m\times N}\times\mathbb{R}^m,
\label{eq:newnorm1}
\end{equation}
where $\lVert f\rVert^2_{h,2}=\sum\limits_{j=0}^{N-1}\sum\limits_{e=1}^{m}(f_{e,j})^2h$ is the discrete $L^2$-norm of $f$.

Since we are interested in a feedback stabilization for the numerical discretization, we equip \eqref{eq:fcompact} with
\begin{subequations}
	\begin{align}
	&g_{in}(t)=Gf(t,l),
	\label{eq:fcompact31}
	\end{align}
\end{subequations}
where the matrix $G\in\mathbb{R}^{m\times m}$ is the inflow matrix with constant entries, coupling the inflow with the outflow of the system. 
The following theorem states under which conditions the numerical discretization for the serial network model is exponentially stable.
\begin{theo}
	\label{theo:discr}
	Let $T>0$ be arbitrarily large but fixed, $f_{in}^k \leq \mu_1$ 
	and the feedback law $f_{in}^k=u_1^k=f_{1,-1}^k$ satisfies for $k\in\{0,...,K-1\}$ the inequality:  
	\begin{align} 
	\nonumber  &\sum\limits_{e=1}^{m}{v}_e\left((f_{e,-1}^k)^2p_e\exp(-\eta_e x_{e,0})-(f_{e,N-1}^k)^2p_e\exp(-\eta_ex_{e,N-1})\right)\\
	&+\sum\limits_{e=1}^{m}\left(2q_{e}^k(g_{in,e}^k-g_{out,e}^k)+\tau(g_{in,e}^k-g_{out,e}^k)^2\right)c_e\exp(-\tilde{\eta}_e{v}_et_{k})\exp(-\tilde{\eta}_e{v}_e\tau)\leq0. \label{ineq}
	\end{align}
	Then, provided the CFL condition \eqref{eq:cfl1} is fulfilled, the following holds: there exist $\eta_e>0$ and $\tilde{\eta}_e>0$ such that the numerical scheme \eqref{eq:d1}-\eqref{eq:d5} is exponentially stable, i.e., the numerical solution satisfies
	\begin{equation}
	V^{k+1}\leq \exp(-\nu\tau(k+1))V^{0},~\text{for}~k\in\{0,\ldots,K-2\},
	\label{eq:Vup1}
	\end{equation}
	where the decay rate $\nu$ is given by:
	\begin{equation}\nu=\min\Big\{\frac{1}{h}\min_e({v}_e(1-\exp(-\eta_eh))),\frac{1}{h}\max_e({v}_e)\min_e(1-\exp(-\tilde{\eta}_e{v}_e\tau))\Big\}>0.
	\label{eq:nu0}
	\end{equation} 
	Furthermore, $f_{e,j}^{k+1}$ and $q_e^{k+1}$ are exponentially stable in the sense of the discrete norm in \eqref{eq:newnorm1}, i.e., 
	\begin{equation}
	\sum\limits_{j=0}^{N-1}\sum\limits_{e=1}^{m}(f_{e,j}^{k+1})^2h+\sum\limits_{e=1}^{m}(q_{e}^{k+1})^2\leq C\exp(-\nu\tau(k+1))\left(\sum\limits_{j=0}^{N-1}\sum\limits_{e=1}^{m}(f_{e,j}^0)^2h+\sum\limits_{e=1}^{m}(q_{e}^0)^2\right),
	\label{eq:normabsch}
	\end{equation}
	for $k\in\{0,...,K-2\}$ and $C>0$.
\end{theo}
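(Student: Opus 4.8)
The plan is to prove the one-step contraction $V^{k+1}\le e^{-\nu\tau}V^{k}$ for the discrete Lyapunov function \eqref{eq:disclya1}, and then obtain \eqref{eq:Vup1} by iterating this estimate $(k+1)$ times down to $V^{0}$; the bound \eqref{eq:normabsch} in the discrete norm \eqref{eq:newnorm1} follows afterwards by norm equivalence. This is the discrete analogue of Lemma \ref{lem:conditionu}: the role of $\dot V\le-\nu V$ is played by the contraction, and the role of the continuous boundary condition \eqref{eq:conditionu} by hypothesis \eqref{ineq}. Throughout I would split $V^{k+1}=V_1^{k+1}+V_2^{k+1}$ into its flux and queue parts, treat each separately, and recombine.

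For the flux part I would substitute the upwind update \eqref{eq:d1}, written as $f_{e,j}^{k+1}=(1-\lambda_e)f_{e,j}^{k}+\lambda_e f_{e,j-1}^{k}$ with $\lambda_e=\tfrac{\tau}{h}v_e\in[0,1]$ by the CFL condition \eqref{eq:cfl1}. Convexity of $t\mapsto t^{2}$ gives the pointwise bound $(f_{e,j}^{k+1})^{2}\le(1-\lambda_e)(f_{e,j}^{k})^{2}+\lambda_e(f_{e,j-1}^{k})^{2}$. Summing this against the weights $p_e e^{-\eta_e x_{e,j}}h$ and performing a discrete summation by parts, i.e.\ shifting the index $j\mapsto j+1$ in the $f_{e,j-1}^{k}$ contribution so that the weight ratio becomes $e^{-\eta_e h}$, separates $V_1^{k+1}$ into an interior bulk term carrying the factor $1-\lambda_e(1-e^{-\eta_e h})$, an inflow term supported on the ghost cell $j=-1$, and an outflow term at $j=N-1$. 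Using $\lambda_e=\tfrac{\tau}{h}v_e$, these boundary terms reduce to the flux contributions $\tau v_e p_e\bigl((f_{e,-1}^{k})^{2}e^{-\eta_e x_{e,0}}-(f_{e,N-1}^{k})^{2}e^{-\eta_e x_{e,N-1}}\bigr)$ appearing in \eqref{ineq}.

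For the queue part I would substitute the explicit Euler update \eqref{eq:d4} and expand the square exactly: with $\delta_e^{k}:=g_{in,e}^{k}-g_{out,e}^{k}$ one has $(q_e^{k+1})^{2}=(q_e^{k})^{2}+\tau\bigl(2q_e^{k}\delta_e^{k}+\tau(\delta_e^{k})^{2}\bigr)$, while the weight factorizes as $e^{-\tilde{\eta}_e v_e t_{k+1}}=e^{-\tilde{\eta}_e v_e t_k}e^{-\tilde{\eta}_e v_e\tau}$ since $t_{k+1}=t_k+\tau$. This yields a bulk term $(q_e^{k})^{2}c_e e^{-\tilde{\eta}_e v_e t_k}$ weighted by $e^{-\tilde{\eta}_e v_e\tau}$, together with exactly the queue-coupling terms of \eqref{ineq}; this is where the quadratic correction $\tau(\delta_e^{k})^{2}$, absent in the continuous Lemma \ref{lem:conditionu}, arises. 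Recombining, the non-bulk terms are, up to a common factor $\tau$, precisely the left-hand side of \eqref{ineq}, so the hypothesis renders them nonpositive and they may be dropped. It then remains to dominate both bulk factors by the common rate $e^{-\nu\tau}$: for the flux factor I would use $1-x\le e^{-x}$ to get $1-\lambda_e(1-e^{-\eta_e h})\le\exp(-\tfrac{\tau}{h}v_e(1-e^{-\eta_e h}))\le e^{-\nu\tau}$, and for the queue factor I would combine $1-x\le e^{-x}$ with the CFL condition $\tfrac{\bar v\tau}{h}\le1$ to show $e^{-\tilde{\eta}_e v_e\tau}\le 1-\min_e(1-e^{-\tilde{\eta}_e v_e\tau})\le e^{-\nu\tau}$; both are consistent with $\nu$ as defined in \eqref{eq:nu0}. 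Iterating gives \eqref{eq:Vup1}, and \eqref{eq:normabsch} follows since on the fixed box $[0,l]\times[0,T]$ the weights $p_e e^{-\eta_e x}$ and $c_e e^{-\tilde{\eta}_e v_e t}$ are bounded above and below by positive constants, so $V^{k}$ is sandwiched between multiples of \eqref{eq:newnorm1} with $C$ the ratio of the extremal weight values.

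I expect the main obstacle to be the discrete summation by parts for $V_1^{k+1}$ and the careful bookkeeping of the exponential weights under the index shift, which systematically replaces $e^{-\eta_e x_{e,j}}$ by $e^{-\eta_e x_{e,j\pm1}}$ and thereby generates the $e^{-\eta_e h}$ factors; these must be tracked exactly so that the surviving boundary terms coincide with those in \eqref{ineq} and the interior factor is exactly $1-\lambda_e(1-e^{-\eta_e h})$. The second delicate point is that the flux and queue bulk terms a priori decay at different rates; reconciling them into the single rate \eqref{eq:nu0} is precisely where the CFL condition \eqref{eq:cfl1} is indispensable, namely to absorb the factor $\bar v\tau/h$ in the queue estimate.
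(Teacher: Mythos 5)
Your proposal is correct and follows essentially the same route as the paper's proof: the same splitting into flux and queue parts, convexity of the square under the CFL condition for the upwind update, exact binomial expansion for the explicit Euler queue update, identification of the boundary and coupling remainder with hypothesis \eqref{ineq}, and norm equivalence of $V^k$ with \eqref{eq:newnorm1} on the fixed box to get \eqref{eq:normabsch}. The only cosmetic difference is that you absorb the bulk factors directly into $e^{-\nu\tau}$ to get a one-step multiplicative contraction, whereas the paper first establishes $V^{k+1}\le(1-\tau\nu)V^k$ and only at the end converts $(1-\tau\nu)^{k+1}$ into $\exp(-\nu\tau(k+1))$ via $\ln(1-x)\le -x$ --- the same inequality $1-x\le e^{-x}$, applied at a different point of the argument.
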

 
\begin{proof}
	In order to obtain the exponential stability of the numerical solution, we show that the discrete time derivative of the discrete Lyapunov function in \eqref{eq:disclya1} fulfills:
	\begin{equation*}
	\frac{V^{k+1}-V^k}{\tau}\leq-\nu V^k,~\text{for a decay rate}~\nu>0.
	\end{equation*}
	By inserting the discrete Lyapunov function at time $t^k$ and $t^{k+1}$, we obtain:
	
	\begin{align*}
	\frac{V^{k+1}-{V}^k}{\tau}&=\sum\limits_{j=0}^{N-1}\sum\limits_{e=1}^{m}\left(\left(f_{e,j}^{k+1}\right)^2-\left(f_{e,j}^k\right)^2\right)p_e\exp(-\eta_ex_{e,j})\frac{h}{\tau}\\
	&+\sum\limits_{e=1}^{m}\left((q_e^{k+1})^2c_e\exp(-\tilde{\eta}_e{v}_et_{k+1})-(q_e^k)^2c_e\exp(-\tilde{\eta}_e{v}_et_k)\right)\frac{1}{\tau}\\
	&=C_1+C_2,
	\end{align*}
	with
	\begin{align*}
	C_1&=\sum\limits_{j=0}^{N-1}\sum\limits_{e=1}^{m}\left(\left(f_{e,j}^{k+1}\right)^2-\left(f_{e,j}^k\right)^2\right)p_e\exp(-\eta_ex_{e,j})\frac{h}{\tau},\\
	C_2&=\sum\limits_{e=1}^{m}\left((q_e^{k+1})^2c_e\exp(-\tilde{\eta}_e{v}_et_{k+1})-(q_e^k)^2c_e\exp(-\tilde{\eta}_e{v}_et_k)\right)\frac{1}{\tau}.
	\end{align*}
	At first, we have a closer look at $C_1$ by using the numerical discretization scheme \eqref{eq:d1}-\eqref{eq:d3} for the advection equation:
	\begin{equation*}
	\begin{array}[b]{r@{}c@{}l}
	C_1 & \overset{\eqref{eq:d1}}{=} & \sum\limits_{j=0}^{N-1}\sum\limits_{e=1}^{m}\left(\left(f_{e,j}^k-\frac{\tau}{h}v_e(f_{e,j}^k-f_{e,j-1}^k)\right)^2-\left(f_{e,j}^k\right)^2\right)p_e\exp(-\eta_ex_{e,j})\frac{h}{\tau}\\
	& = & \sum\limits_{j=0}^{N-1}\sum\limits_{e=1}^{m}\left(\left(1-\frac{\tau}{h}v_e\right)f_{e,j}^k+\frac{\tau}{h}v_ef_{e,j-1}^k\right)^2p_e\exp(-\eta_ex_{e,j})\frac{h}{\tau}\\
	& - & \sum\limits_{j=0}^{N-1}\sum\limits_{e=1}^{m}\left(f_{e,j}^k\right)^2p_e\exp(-\eta_ex_{e,j})\frac{h}{\tau}.
	\end{array}
	\end{equation*}
	Applying the CFL condition, 
	we obtain
	\begin{align}
	C_1 &\leq\sum\limits_{j=0}^{N-1}\sum\limits_{e=1}^{m}\left(\left(1-\frac{\tau}{h}v_e\right)\left(f_{e,j}^k\right)^2+\frac{\tau}{h}v_e\left(f_{e,j-1}^k\right)^2\right)p_e\exp(-\eta_ex_{e,j})\frac{h}{\tau} \nonumber \\
	& - \sum\limits_{j=0}^{N-1}\sum\limits_{e=1}^{m}\left(f_{e,j}^k\right)^2p_e\exp(-\eta_ex_{e,j})\frac{h}{\tau} \nonumber\\
	& = \sum\limits_{j=0}^{N-1}\sum\limits_{e=1}^{m}\left(\frac{\tau}{h}v_e\left(f_{e,j-1}^k\right)^2-\frac{\tau}{h}v_e\left(f_{e,j}^k\right)^2\right)p_e\exp(-\eta_ex_{e,j})\frac{h}{\tau} \nonumber \\
	& = \sum\limits_{e=1}^{m}v_e\sum\limits_{j=0}^{N-1}\left(\left(f_{e,j-1}^k\right)^2-\left(f_{e,j}^k\right)^2\right)p_e\exp(-\eta_ex_{e,j}). \nonumber
	\end{align}
	In the next step, we make use of an index shift that requires the knowledge of the cell center $x_N$ outside of the domain which is not needed in our discretization, since we only consider positive velocities. Therefore, by assuming $x_{e,N}=x_{e,N-1}$, we get:
	\begin{equation*}
	\begin{array}[b]{r@{}c@{}l}
	C_1 & \leq & \sum\limits_{e=1}^{m}v_e\left(\sum\limits_{j=-1}^{N-2}\left(f_{e,j}^k\right)^2p_e\exp(-\eta_ex_{e,j+1})-\sum\limits_{j=0}^{N-1}\left(f_{e,j}^k\right)^2p_e\exp(-\eta_ex_{e,j})\right)\\
	& = & \sum\limits_{e=1}^{m}v_e\left(\sum\limits_{j=0}^{N-1}\left(f_{e,j}^k\right)^2\left(p_e\exp(-\eta_ex_{e,j+1})-p_e\exp(-\eta_ex_{e,j})\right)\right)\\
	& + & \sum\limits_{e=1}^{m}v_e\left(\left(f_{e,-1}^k\right)^2p_e\exp(-\eta_ex_{e,0})-\left(f_{e,N-1}^k\right)^2p_e\exp(-\eta_ex_{e,N})\right)\\
	& \overset{x_{e,N}=x_{e,N-1}}{=} & \sum\limits_{e=1}^{m}v_e\left(\sum\limits_{j=0}^{N-1}\left(\exp(-\eta_eh)-1\right)\left(f_{e,j}^k\right)^2\left(p_e\exp(-\eta_ex_{e,j})\right)\right)\\
	& + & 
	\sum\limits_{e=1}^{m}v_e\left(\left(f_{e,-1}^k\right)^2p_e\exp(-\eta_ex_{e,0})-\left(f_{e,N-1}^k\right)^2p_e\exp(-\eta_ex_{e,N-1})\right).
	\end{array}
	\end{equation*}
	Therefore, we have
	\begin{equation*}
	C_1\leq S_1+S_2,
	\end{equation*}
	with
	\begin{align*}
	S_1&=\sum\limits_{e=1}^{m}{v}_e\left(\sum\limits_{j=0}^{N-1}\left(\exp(-\eta_eh)-1\right)(f_{e,j}^k)^2p_e\exp(-\eta_ex_{e,j})\right),\\
	S_2&=\sum\limits_{e=1}^{m}{v}_e\left((f_{e,-1}^k)^2p_e\exp(-\eta_e x_{e,0})-(f_{e,N-1}^k)^2p_e\exp(-\eta_ex_{e,N-1})\right).
	\end{align*}
Regarding $C_2$, using the numerical discretization, or more specifically, the explicit Euler method for the queues, we obtain:
	\begin{equation*}
	\begin{array}[b]{r@{}c@{}l}
	C_2&\overset{\eqref{eq:d4}}{=}& \sum\limits_{e=1}^{m}\left((q_e^{k}+\tau(g_{in,e}^k-g_{out,e}^k))^2c_e\exp(-\tilde{\eta}_e{v}_et_{k+1})-(q_e^k)^2c_e\exp(-\tilde{\eta}_e{v}_et_k)\right)\frac{1}{\tau}.\\
	\end{array}
	\end{equation*}
	In the next step, we use the binomial formula, which yields:
	\begin{equation*}
	\begin{array}[b]{r@{}c@{}l}
	C_2&=&\sum\limits_{e=1}^{m}\left((q_e^{k})^2c_e\exp(-\tilde{\eta}_e{v}_et_{k})\exp(-\tilde{\eta}_e{v}_e\tau)-(q_e^{k})^2c_e\exp(-\tilde{\eta}_e{v}_et_{k})\right)\frac{1}{\tau}\\
	&+&\sum\limits_{e=1}^{m}\left(2q_{e}^k\tau(g_{in,e}^k-g_{out,e}^k)+\tau^2(g_{in,e}^k-g_{out,e}^k)^2\right)\frac{1}{\tau}c_e\exp(-\tilde{\eta}_e{v}_et_{k})\exp(-\tilde{\eta}_e{v}_e\tau)\\
	&=&Z_1+Z_2,
	\end{array}
	\end{equation*}
	with
	\begin{align*}
	Z_1&=\sum\limits_{e=1}^{m}(q_e^{k})^2c_e\exp(-\tilde{\eta}_e{v}_et_{k})(\exp(-\tilde{\eta}_e{v}_e\tau)-1)\frac{1}{\tau},\\
	Z_2&=\sum\limits_{e=1}^{m}\left(2q_{e}^k(g_{in,e}^k-g_{out,e}^k)+\tau(g_{in,e}^k-g_{out,e}^k)^2\right)c_e\exp(-\tilde{\eta}_e{v}_et_{k})\exp(-\tilde{\eta}_e{v}_e\tau).
	\end{align*}
	
	Using the CFL condition $\max_e({v}_e)\frac{\tau}{h}\leq1$ leads to:
	\begin{equation*}
	\begin{array}[b]{r@{}c@{}l}
	S_1+Z_1&=&\sum\limits_{e=1}^{m}{v}_e\left(\sum\limits_{j=0}^{N-1}\left(\exp(-\eta_eh)-1\right)(f_{e,j}^k)^2p_e\exp(-\eta_ex_{e,j})\right)\\
	&+&\sum\limits_{e=1}^{m}(q_e^{k})^2c_e\exp(-\tilde{\eta}_e{v}_et_{k})(\exp(-\tilde{\eta}_e{v}_e\tau)-1)\frac{1}{\tau}\\
	&\overset{CFL}{\leq}&\sum\limits_{e=1}^{m}{v}_e\left(\exp(-\eta_eh)-1\right)\left(\sum\limits_{j=0}^{N-1}(f_{e,j}^k)^2p_e\exp(-\eta_ex_{e,j})\right)\\
	&+&\sum\limits_{e=1}^{m}(q_e^{k})^2c_e\exp(-\tilde{\eta}_e{v}_et_{k})(\exp(-\tilde{\eta}_e{v}_e\tau)-1)\frac{\max_e({v}_e)}{h}\\
	&\leq&\max_e({v}_e(\exp(-\eta_eh)-1))\sum\limits_{e=1}^{m}\sum\limits_{j=0}^{N-1}(f_{e,j}^k)^2p_e\exp(-\eta_ex_{e,j})\\
	&+&\max_e({v}_e)\max_e(\exp(-\tilde{\eta}_e{v}_e\tau)-1)\sum\limits_{e=1}^{m}(q_e^{k})^2c_e\exp(-\tilde{\eta}_e{v}_et_{k})\frac{1}{h}\\
	&\leq&\frac{1}{h}\max\Big\{\max_e({v}_e(\exp(-\eta_eh)-1)),\max_e({v}_e)\max_e(\exp(-\tilde{\eta}_e{v}_e\tau)-1)\Big\}V^k\\
	&=&-\nu V^k.
	\end{array}
	\end{equation*}
	Therefore, if the inequality assumption \eqref{ineq} is fulfilled, i.e., $S_2+Z_2\leq 0$, we obtain:
	\begin{align*}
	\frac{V^{k+1}-{V}^k}{\tau}&\leq S_1+Z_1\\
	&\leq-\nu V^k.
	\end{align*}
	Hence, we have $V^{k+1}-V^k\leq-\tau\nu V^k$ with $\nu=\min\{\nu_1,\nu_2\}>0$, whereby
	\begin{align}
	\nu_1&=-\frac{1}{h}\max_e({v}_e(\exp(-\eta_eh)-1))=\frac{1}{h}\min_e({v}_e(1-\exp(-\eta_eh)))~\text{and} \label{eq:nu1}\\
	\nu_2&=-\frac{1}{h}\max_e({v}_e)\max_e(\exp(-\tilde{\eta}_e{v}_e\tau)-1)=\frac{1}{h}\max_e({v}_e)\min_e(1-\exp(-\tilde{\eta}_e{v}_e\tau)).\label{eq:nu2}
	\end{align}
	Using the CFL condition $\max_e({v}_e)\frac{\tau}{h}\leq1$, implies
	\begin{equation*}
	\begin{array}[b]{r@{}c@{}l}
	0&<&\tau\nu=\frac{\tau}{h}\min\{\min_e({v}_e(1-\exp(-\eta_eh))),\max_e({v}_e)\min_e(1-\exp(-\tilde{\eta}_e{v}_e\tau))\}\\
	&\overset{\text{CFL}}{\leq}&\frac{1}{\max_e({v}_e)}\min\{\min_e({v}_e(1-\exp(-\eta_eh))),\max_e({v}_e)\min_e(1-\exp(-\tilde{\eta}_e{v}_e\tau))\}\leq 1.
	\end{array}
	\end{equation*}
	Finally, we prove estimate \eqref{eq:Vup1} by recursively applying $V^{k+1}\leq(1-\tau\nu)V^k$, which reads as
	\begin{align}
	\nonumber V^{k+1} &\leq(1-\tau\nu)V^k\leq(1-\tau\nu)^{k+1}V^0\\
	&=\exp((k+1)\ln(1-\tau\nu))V^0\leq\exp(-\nu\tau(k+1))V^0,
	\label{eq:Vab}
	\end{align}
	where we use that $\ln(1-\tau\nu)\leq-\tau\nu$ for $\tau\nu\in[0,1]$.
	
	It remains to show \eqref{eq:normabsch}. It is well-known that the discrete Lyapunov function is bounded by the discrete norm in \eqref{eq:newnorm1}:
	\begin{equation*}
	\bar{C}\left(\sum\limits_{j=0}^{N-1}\sum\limits_{e=1}^{m}(f_{e,j}^k)^2h+\sum\limits_{e=1}^{m}(q_{e}^k)^2\right)\leq V^k\leq\tilde{C}\left(\sum\limits_{j=0}^{N-1}\sum\limits_{e=1}^{m}(f_{e,j}^k)^2h+\sum\limits_{e=1}^{m}(q_{e}^k)^2\right),
	\end{equation*}
	for some $\bar{C},\tilde{C}>0$.
	The above norm estimation in combination with \eqref{eq:Vab} leads to the desired result:
	\begin{equation*}
	\sum\limits_{j=0}^{N-1}\sum\limits_{e=1}^{m}(f_{e,j}^{k+1})^2h+\sum\limits_{e=1}^{m}(q_{e}^{k+1})^2\leq C\exp(-\nu\tau(k+1))\left(\sum\limits_{j=0}^{N-1}\sum\limits_{e=1}^{m}(f_{e,j}^0)^2h+\sum\limits_{e=1}^{m}(q_{e}^0)^2\right),
	\end{equation*}
	for $k\in\{0,...,K-2\}$ and $C=\frac{\tilde{C}}{\bar{C}}>0$.
\end{proof}

The inequality assumption \eqref{ineq} in Theorem \ref{theo:discr} is a crucial point for exponential stability of the discretized problem. Therefore, we focus on this assumption in the following and start with the subsequent Remark \ref{rem:rem1}.

\begin{rem}
	For sufficiently large $k$, assumption \eqref{ineq} reduces to the condition that the matrix $G^TP_0\Lambda G-P_{N-1}\Lambda$ is negative semidefinite, where $G$ is the inflow matrix from \eqref{eq:fcompact31} and $P_j$ is defined as $P_j=\text{diag}(p_1\exp(-\eta_1x_{1,j}),\ldots,p_m\exp(-\eta_mx_{m,j}))$. 
	\label{rem:rem1}
\end{rem}
	
	Let us consider again a serial system with two processing units and queues.
	The first intuition is then to insert the coupling conditions for $f_{e,-1}^k$. However, due to the queues determining the coupling conditions, this is 
	comparable to the non-queue case, cf. \cite{Goettlich2017}. We can expect the queue at processor 2 to be equal to zero for $t\rightarrow T$, i.e., $q_2^k=0$ for $k\rightarrow(K-1)$, provided that $T$ is large enough. Therefore, we differentiate two cases, i.e., a queue and no queue at processor 2. 
	\begin{enumerate}
		\item We obtain the non-queue case for $k\rightarrow(K-1)$ leading to the expression $S_2+Z_2$:
		\begin{equation*}
		\begin{array}[b]{r@{}c@{}l}
		S_2+Z_2&\overset{Z_2=0}{=}&\sum\limits_{e=1}^{2}{v}_e\left((f_{e,-1}^k)^2p_e\exp(-\eta_ex_{e,0})-(f_{e,N-1}^k)^2p_e\exp(-\eta_ex_{e,N-1})\right)\\
		&=&\sum\limits_{e=1}^{2}{v}_e\left(\Big(\sum\limits_{l=1}^{2}G_{e,l}f_{l,N-1}^k\Big)^2p_e\exp(-\eta_ex_{e,0})\right)\\
		&-&\sum\limits_{e=1}^{2}{v}_e\left((f_{e,N-1}^k)^2p_e\exp(-\eta_ex_{e,N-1})\right)\\
		&=&(f^k)^TG^TP_0\Lambda G(f^k)-(f^k)^TP_{N-1}\Lambda(f^k),
		\end{array}
		\end{equation*}
		with $f^k=(f_{1,N-1}^k,f_{2,N-1}^k)^T$. To satisfy assumption \eqref{ineq}, we have to ensure that the matrix $G^TP_0\Lambda G-P_{N-1}\Lambda$ is negative semidefinite. Then, by assuming 
		\begin{equation}\label{matrixG}
		G = \begin{pmatrix}
		0 & \kappa \\
		1 & 0 
		\end{pmatrix},
		\end{equation}
		we are able to determine $\kappa>0$ such that a linear feedback control can be applied.
		\item For small $k$ we might have $q_2^k>0$ and $f_{2,-1}^k=\mu_2$. Therefore, we are not able to find $\kappa$ such that exponential stability is obtained. In fact, we have:
		\begin{equation*}
		\begin{array}[b]{r@{}c@{}l}
		S_2&=&{v}_1\big((f_{1,-1}^k)^2p_1\exp(-\eta_1 x_{1,0})-(f_{1,N-1}^k)^2p_1\exp(-\eta_1x_{1,N-1})\big)\\ [1ex]
		&+&{v}_2\big((\mu_2)^2p_2\exp(-\eta_2 x_{2,0})-(f_{2,N-1}^k)^2p_2\exp(-\eta_2x_{2,N-1})\big),\\ [1ex]
		Z_2&\overset{q_1^k=0}{=}&2q_2^k\big(f_{1,N-1}^k-\mu_2\big)c_2\exp(-\tilde{\eta}_2{v}_2t_k)\exp(-\tilde{\eta}_2{v}_2\tau)\\ [1ex]
		&+&\tau\big(f_{1,N-1}^k-\mu_2\big)^2c_2\exp(-\tilde{\eta}_2{v}_2t_k)\exp(-\tilde{\eta}_2{v}_2\tau).
		\end{array}
		\end{equation*}
		For simplicity, we assume $\eta_e=\eta$,~$\tilde{\eta}_e=\tilde{\eta}$ and ${v}_1={v}_2=p_1=p_2=c_2=1$.
		Hence, to ensure $S_2+Z_2\leq0$, the feedback law $u_1^k=f_{1,-1}^k$ from assumption \eqref{ineq} has to fulfill:
		\begin{align}
		\nonumber (u_{1}^k)^2&\leq(f_{1,N-1}^k)^2\exp(-\eta x_{1,N-1})+(f_{2,N-1}^k)^2\exp(-\eta x_{2,N-1})\\
		\nonumber &-(\mu_2)^2\exp(-\eta x_{2,0})-2q_2^k(f_{1,N-1}^k-\mu_2)\exp(-\tilde{\eta}t_k)\exp(-\tilde{\eta}\tau)\\
		&-\tau\big(f_{1,N-1}^k-\mu_2\big)^2\exp(-\tilde{\eta}t_k)\exp(-\tilde{\eta}\tau)\eqqcolon Y^k,
		\label{eq:u1k}
		\end{align}
	\end{enumerate}
or, equivalently, if $Y^k\geq0$, $u_1^k\leq\sqrt{Y^k}$.
	At the beginning, if queue 2 is not equal to zero yet, we use the time-dependent control law $Y^k$ \eqref{eq:u1k}. As soon as queue 2 is damped to 0, we can consider case 1 again. That means that for $k\rightarrow (K-1)$, we are able to determine a matrix $G$, and thus a $\kappa>0$, such that exponential stability is ensured. 
	Consequently, it is not possible to find a general $\kappa>0$ for all $t_k$ such that exponential stability is obtained. Quite the contrary, to include a linear feedback law, we have to differentiate between the two cases above. This explains the kink in the Lyapunov function when only the linear feedback (LF) with $\kappa>0$ for all time steps is used, see Figure \ref{img:lyapcompl}. 
	
In the following, we call the feedback law that differentiates between the non-queue case and the case with queues the {\em mixed feedback (MF)}.
We note that $Y^k$ in $\eqref{eq:u1k}$ 	coincides with the continuous version $X(t)$ in \eqref{eq:X_t} (under the assumption that $q_2^k>0$ and ${v}_e=1$) as $\tau\rightarrow0$. 
A similar result is obtained for the numerical decay rate $\nu=\min(\nu_1,\nu_2)$ (see \eqref{eq:nu0}) of the discrete Lyapunov function:
\begin{cor}\label{cor}
Assume $\tau = \frac{h}{\max_e(\tilde{v}^e)}$, i.e.\ CFL with equality, is satisfied. Then the numerical decay rate $\nu=\min(\nu_1,\nu_2)$ converges to the analytical decay rate $\min(\eta,\tilde{\eta})\min_e({v}_e)$ as $h \to 0$
(see Lemma \ref{lem:conditionu}).
\end{cor}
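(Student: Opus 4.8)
The plan is to substitute the standing assumptions $\eta_e=\eta$, $\tilde\eta_e=\tilde\eta$ and the CFL-equality $\tau=h/\bar v$ (with $\bar v=\max_e v_e$, cf.\ \eqref{eq:cfl1}) into the explicit expressions \eqref{eq:nu1}--\eqref{eq:nu2} for $\nu_1$ and $\nu_2$, then pass to the limit $h\to0$ in each factor using the elementary fact that $\frac{1-\exp(-\alpha h)}{h}\to\alpha$ as $h\to0$ for every $\alpha>0$, and finally appeal to continuity of $\min\{\cdot,\cdot\}$ to assemble the two limits.

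First I would treat $\nu_1$. Since $\eta_e=\eta$ does not depend on $e$, the factor $1-\exp(-\eta h)$ pulls out of the minimum in \eqref{eq:nu1}, giving $\nu_1=\frac{1-\exp(-\eta h)}{h}\min_e(v_e)$, which tends to $\eta\min_e(v_e)$ by the elementary limit with $\alpha=\eta$.

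Next I would handle $\nu_2$. Inserting $\tau=h/\bar v$ into \eqref{eq:nu2} yields $\nu_2=\frac{\bar v}{h}\min_e\!\big(1-\exp(-\tilde\eta\,(v_e/\bar v)\,h)\big)$. The key observation here is that $x\mapsto1-\exp(-x)$ is strictly increasing while the velocities $v_e$ are constants independent of $h$, so the minimizing index is fixed and the inner minimum is attained at the smallest velocity $\min_e(v_e)$ for every $h>0$. Hence $\nu_2=\bar v\cdot\frac{1-\exp(-\tilde\eta\,(\min_e(v_e)/\bar v)\,h)}{h}$, which tends to $\tilde\eta\min_e(v_e)$ by the elementary limit with $\alpha=\tilde\eta\min_e(v_e)/\bar v$.

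Combining the two limits via continuity of the minimum then gives $\nu=\min(\nu_1,\nu_2)\to\min\!\big(\eta\min_e(v_e),\tilde\eta\min_e(v_e)\big)=\min(\eta,\tilde\eta)\min_e(v_e)$, the analytical decay rate of Lemma \ref{lem:conditionu}. The computation is routine; the only step requiring a moment's care is the one for $\nu_2$, where one must verify that the active index of the inner minimum does not migrate with $h$ before taking the limit — precisely what the monotonicity of $1-\exp(-x)$ together with the $h$-independence of the velocities guarantees.
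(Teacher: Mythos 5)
Your proof is correct and takes essentially the same route as the paper's: both split $\nu=\min(\nu_1,\nu_2)$ into separate limits for $\nu_1$ and $\nu_2$, use the first-order expansion of $1-\exp(-\alpha h)$ in $h$, and assemble the result via continuity of the minimum. The only cosmetic difference is that the paper keeps the element-dependent rates $\eta_e,\tilde{\eta}_e$ and Taylor-expands inside the minimum (obtaining $\min_e(v_e\eta_e)$ and $\min_e(v_e\tilde{\eta}_e)$, which reduce to the claimed limit under the standing assumption $\eta_e=\eta$, $\tilde{\eta}_e=\tilde{\eta}$), whereas you invoke that uniformity assumption up front to factor out the common term and pin down the minimizing index --- extra care that is harmless but not needed once continuity of the finite minimum is in play.
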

\begin{proof}
Since the minimum is continuous, we show the convergence of $\nu_1 \to \eta \min_e({v}_e)$ and $\nu_2 \to \tilde{\eta} \min_e({v}_e)$ separately.
We use a the Taylor expansion $Tf(h;0)=\eta_e h+O(h^2)$ for $f(h)\coloneqq1-\exp(-\eta_eh)$ and obtain
	\begin{align*}
	\nu_1& = \min_e\left(\tilde{v}^e\frac{1}{h}Tf(h;0)\right) = \min_e\left(\tilde{v}^e\Big(\eta_e+O(h)\Big)\right)\overset{h\rightarrow 0}{\longrightarrow} \min_e(\tilde{v}^e\eta_e).
	\end{align*}
Using the CFL condition with equality, we have 
	\begin{align*}
	\nu_2&=\frac{1}{h}\max_e(\tilde{v}^e)\min_e(1-\exp(-\tilde{\eta}_e\tilde{v}^e\tau)) =\frac{1}{h}\max_e(\tilde{v}^e)\min_e\left(1-\exp\Big(-\tilde{\eta}_e\tilde{v}^e\frac{h}{\max_e(\tilde{v}^e)}\Big)\right).
	\end{align*}
	We use the Taylor expansion $ Tf(h;0)=\frac{\tilde{v}^e}{\max_e(\tilde{v}^e)}\tilde{\eta}_e h+O(h^2) $ for $f(h)\coloneqq1-\exp\left(-\tilde{\eta}_e\tilde{v}^e\frac{h}{\max_e(\tilde{v}^e)}\right)$. This yields
	\begin{align*}
	\nu_2& =\min_e\left(\frac{1}{h}\max_e(\tilde{v}^e)\Big(1-\exp\Big(-\tilde{\eta}_e\tilde{v}^e\frac{h}{\max_e(\tilde{v}^e)}\Big)\Big)\right)\\
	& = \min_e\left(\tilde{v}^e\big(\tilde{\eta}_e+O(h)\big)\right)\\
	\overset{h\rightarrow 0}{\longrightarrow} & ~~~\min_e(\tilde{v}^e\tilde{\eta}_e).
	\end{align*}
\end{proof}

\subsection{Computational Experiments}
We stick to the example presented in Figure~\ref{img:ginout}.
First, we consider case 1, which means that we have to ensure that the matrix $G^TP_0\Lambda G-P_{N-1}\Lambda$ is negative semidefinite. Since we consider a serial network, the matrix $G$ is given by \eqref{matrixG}.
The two eigenvalues of $G^TP_0\Lambda G-P_{N-1}\Lambda$ are:
\begin{align*}
\lambda_1&=p_2{v}_2\exp(-\eta x_{2,0})-p_1{v}_1\exp(-\eta x_{1,N-1}),\\
\lambda_2&=\kappa^2p_1{v}_1\exp(-\eta x_{1,0})-p_2{v}_2\exp(-\eta x_{2,N-1}).
\end{align*}
Assuming $p_1=p_2={v}_1={v}_2=1$, due to Theorem \ref{theo:discr}, we further have to consider the offset $x_{1,N-1}=x_{2,0}=l$ to guarantee $\lambda_1\leq0$:
\begin{align*}
\lambda_1&=\exp(-\eta l)-\exp(-\eta l)=0,\\
\lambda_2&=\kappa^2-\exp(-\eta 2l).
\end{align*}
To ensure the exponential stability, the eigenvalue $\lambda_2$ has to be nonpositive. Therefore, the linear feedback law is given by:
\begin{equation*}
u_1^k=\kappa f_{2,N-1}^k,
\end{equation*} 
with $\kappa>0$ satisfying
\begin{equation}
\kappa^2\leq\exp(-\eta2l) \; \Leftrightarrow \; \kappa\leq\exp(-\eta l).
\label{eq:kappabed}
\end{equation} 

Note that the above case is only valid if the queue in front of processor 2 is equal to zero. If this is not the case, we implement the feedback law given by $Y^k$ from \eqref{eq:u1k}. Due to the offset in the spatial domain we obtain:
\begin{equation*}
\begin{array}[b]{r@{}c@{}l}
(u_{1}^k)^2&\leq&(f_{1,N-1}^k)^2\exp(-\eta x_{1,N-1})+(f_{2,N-1}^k)^2\exp(-\eta x_{2,N-1})-(\mu_2)^2\exp(-\eta x_{2,0})\\
&-&2q_2^k(f_{1,N-1}^k-\mu_2)\exp(-\tilde{\eta}t_k)\exp(-\tilde{\eta}\tau)-\tau\big(f_{1,N-1}^k-\mu_2\big)^2\exp(-\tilde{\eta}t_k)\exp(-\tilde{\eta}\tau)\\
&\overset{\text{offset}}{=}&(f_{1,N-1}^k)^2\exp(-\eta l)+(f_{2,N-1}^k)^2\exp(-\eta 2l)-(\mu_2)^2\exp(-\eta l)\\
&-&2q_2^k(f_{1,N-1}^k-\mu_2)\exp(-\tilde{\eta}t_k)\exp(-\tilde{\eta}\tau)-\tau\big(f_{1,N-1}^k-\mu_2\big)^2\exp(-\tilde{\eta}t_k)\exp(-\tilde{\eta}\tau)\\
&=&Y^k.
\end{array}
\end{equation*}

So a possible choice for the mixed feedback law is given by:
\begin{equation*}
u_1^k=\begin{cases}0,~~~~~~~~~\text{if}~ Y^k<0,\\\sqrt{Y^k},~~~~\text{if}~Y^k\geq 0.\end{cases}
\label{eq:Ykprac}
\end{equation*}
For the numerical computations, if not stated otherwise, we choose the parameter values $\eta=\tilde{\eta}=\frac{1}{2},l=\frac{1}{2},h=0.01,T=30$ and $\frac{\tau}{h}\max_e{v}_e=1$. From \eqref{eq:kappabed}, we choose $\kappa=e^{-\frac{1}{4}}\approx0.7788$. 
We set $\mu_1=6,\mu_2=4$ and choose the initial conditions $f_1(0,x)=4,f_2(0,x)=4$ and $q_1(0)=0,q_2(0)=1$. The velocities are ${v}_e=1$ for $e\in\{1,2\}$.
We denote the Lyapunov function, i.e.\ the upper bound in \eqref{eq:Vup1}, by:
\begin{equation}
V_{up}^k\coloneqq\exp(-\nu\tau k)V^0.
\label{eq:Vupdiscr}
\end{equation} 
In the first numerical example, we compare the linear feedback control from Section \ref{sec:analytical}, with the mixed feedback control from Theorem \ref{theo:discr} so that there is a difference between the non-queue case and the case with positive queues.
Figure \ref{img:ex1lf} shows a decreasing Lyapunov function with a kink that results from using the linear feedback law with $\kappa=0.5$. 
Apparently, the discrete Lyapunov function lies underneath the upper bound $V_{up}$. However, the kink in the Lyapunov function can not be avoided. Due to the occurrence of the kink, the parameter value $\kappa=0.7788$ is not appropriate for the linear feedback since it would lead to a Lyapunov function that lies above $V_{up}$. For all values of $\kappa$, we can at most expect a decreasing discrete Lyapunov function, i.e., asymptotic stability.

For the mixed feedback law, the parameter value $\kappa=0.7788$ leads to the desired exponential decay of the Lyapunov function, see Figure \ref{img:ex1mf}, and $V^k$ and $V_{up}^k$ overlay almost exactly. The difference in the two approaches becomes even more clear if we have a look at the log-plot of both discrete Lyapunov functions. The log-plot of the discrete Lyapunov function of the mixed feedback is a straight line, whereas we can notice a kink in the log-plot for the linear feedback in Figure \ref{img:ex1lflog}. 
\begin{figure}[htb!]
	\centering
	\begin{subfigure}[b]{.49\textwidth}
		\centering
		\includegraphics[width=\textwidth]{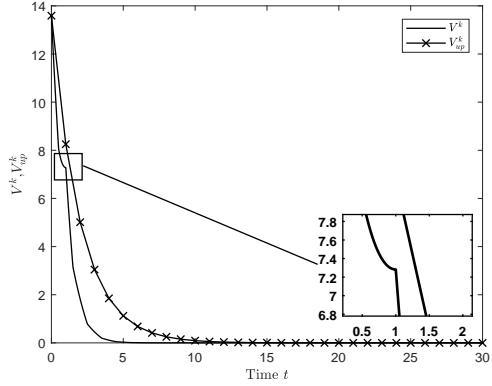}	
			\caption{Lyapunov function for the linear feedback (LF)}
			\label{img:ex1lf}
	\end{subfigure}
	\begin{subfigure}[b]{.49\textwidth}
		\includegraphics[width=\textwidth]{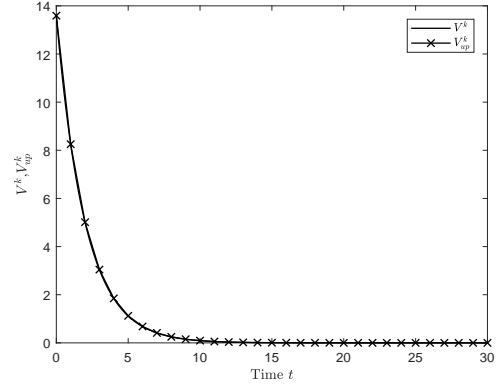}
			\caption{Lyapunov function for the mixed feedback (MF)}
			\label{img:ex1mf}
	\end{subfigure}
	\caption{Discrete Lyapunov functions for linear and mixed feedback}
\end{figure}
\begin{figure}[htb!]
	\centering
	\begin{subfigure}[b]{.49\textwidth}
		\centering
		\includegraphics[width=\textwidth]{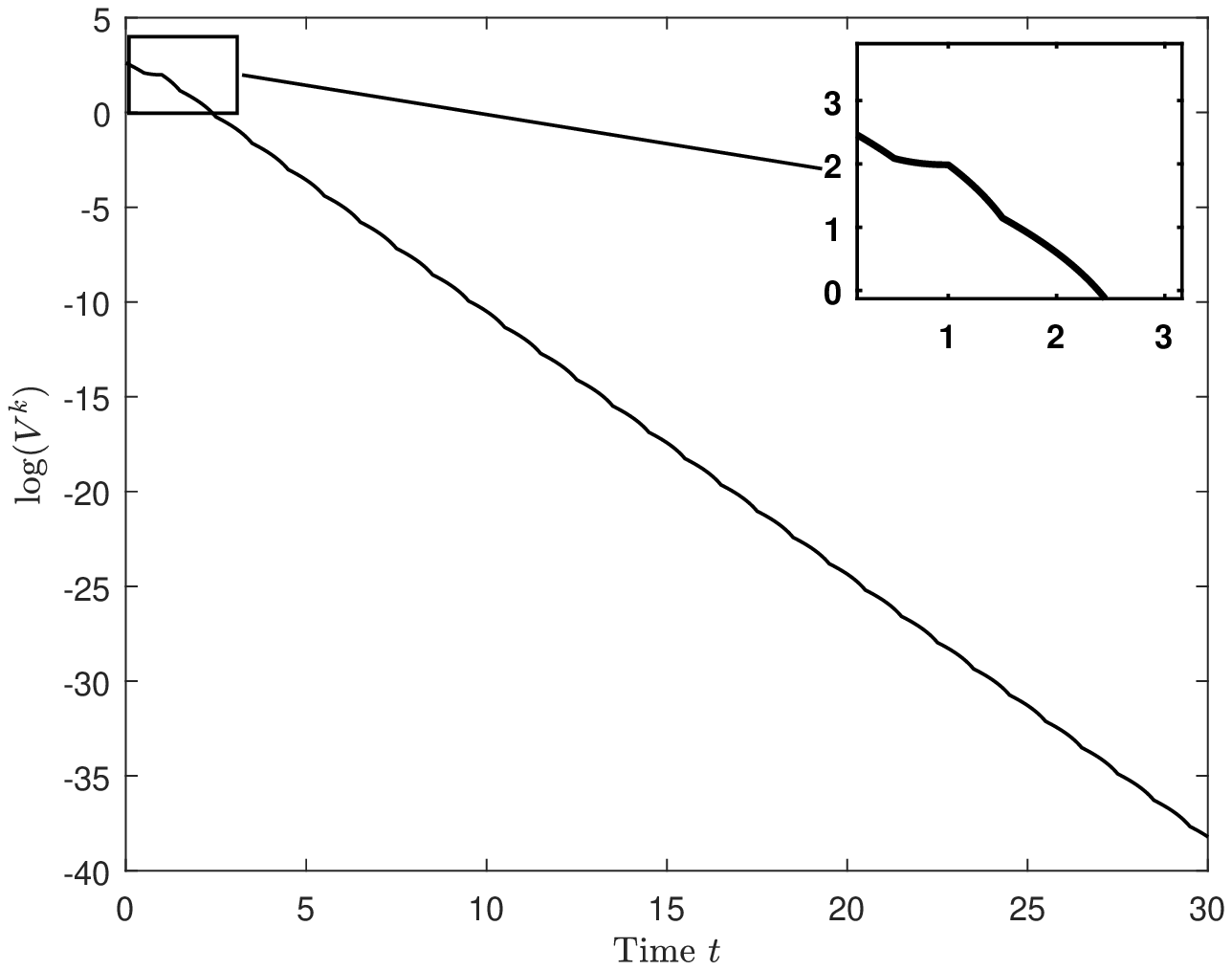}
			\caption{Log-plot of the Lyapunov function (LF)}
			\label{img:ex1lflog}
	\end{subfigure}
	\begin{subfigure}[b]{.49\textwidth}
		\includegraphics[width=\textwidth]{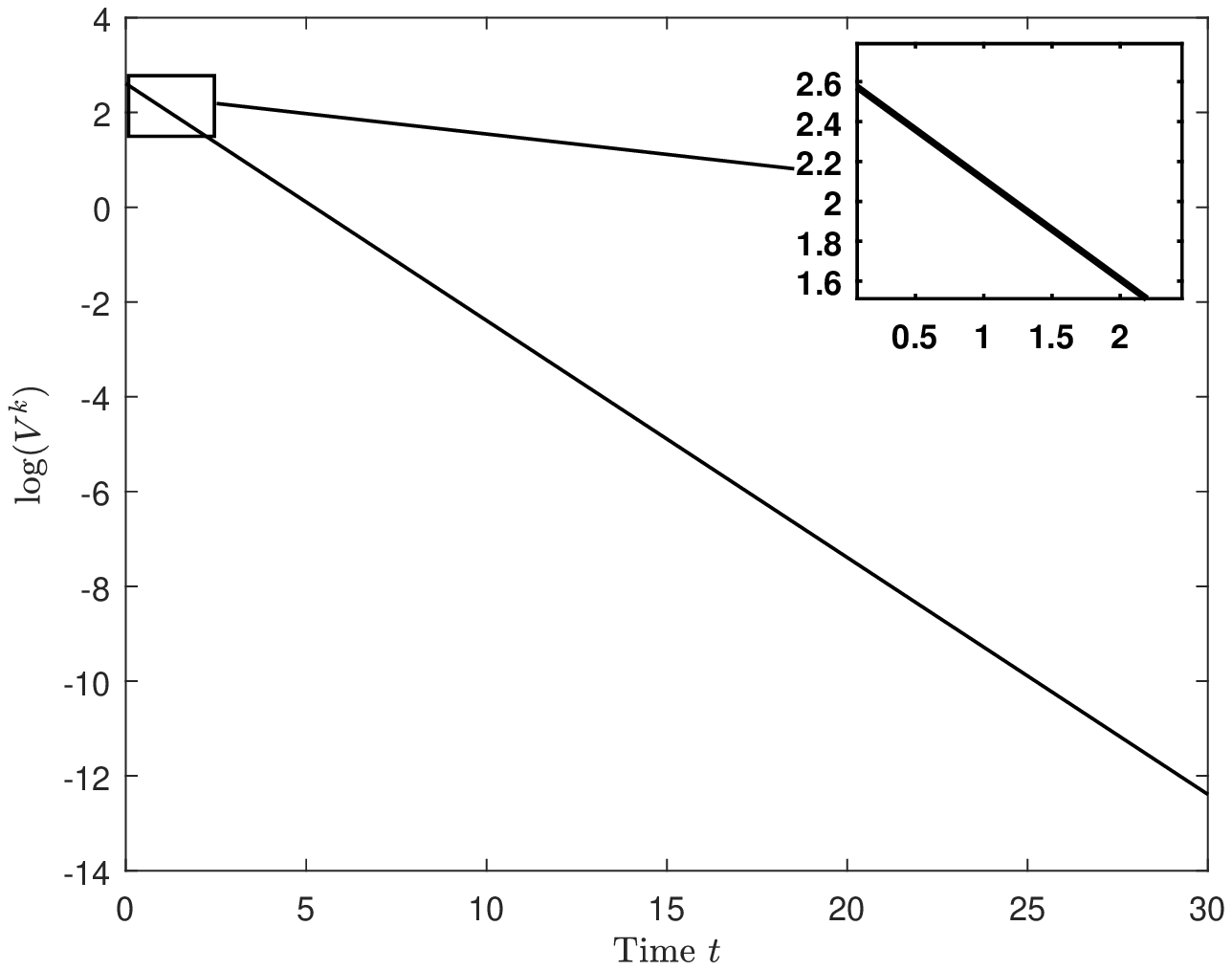}	
			\caption{Log-plot of the Lyapunov function (MF)}
			\label{img:ex1mflog}
	\end{subfigure}
	\caption{Discrete Lyapunov functions for linear and mixed feedback: log-plot}
\end{figure}

For the same setting, we investigate the numerical behavior of the decay rate that is given by:
\begin{equation*}
 \nu=\min\Bigg\{\frac{1}{h}\min_e({v}_e(1-\exp(-\eta_eh))),\frac{1}{h}\max_e({v}_e)\min_e(1-\exp(-\tilde{\eta}_e{v}_e\tau))\Bigg\}. 
 \end{equation*}
 To do so, we set $\eta=\tilde{\eta}=0.575$  and vary the value for the velocities. Table \ref{nu} shows the convergence of the decay rate $\nu$ to $\min_e({v}_e)\min(\eta,\tilde{\eta})$ for ${v}_e=1$ and ${v}_e=\frac{1}{2}$ as mentioned in Corollary \ref{cor}. Furthermore, the table entries 
$\lVert \cdot\rVert_{\infty}$ and $\lVert \cdot\rVert_{L^2}$ denote the $L^\infty$- and $L^2$-norm of the difference $V_{up}^k-V^k$. As expected, we observe first-order convergence of the discrete Lyapunov function towards the upper bound.
\renewcommand{\arraystretch}{1.2}
\newcolumntype{C}[1]{>{\centering\arraybackslash}m{#1}}
\begin{table}[H]
	\centering
	\begin{threeparttable}
		\begin{tabular}{|C{0.6cm}||C{0.9cm}|C{0.8cm}|C{0.9cm}|C{0.8cm}|C{0.9cm}||C{0.9cm}|C{0.8cm}||C{0.9cm}|C{0.8cm}|C{0.9cm}|}
			\hline
			\multicolumn{1}{|c||}{}  &
			\multicolumn{5}{|c||}{${v}_e=1$} & \multicolumn{5}{|c|}{${v}_e=0.5$}\\
			\hline
			$N$ &$\lVert \cdot\rVert_{\infty}$ & Conv. Rate& $\lVert \cdot\rVert_{L^2}$& Conv. Rate & $\nu$ & $\lVert \cdot\rVert_{\infty}$ & Conv. Rate & $\lVert \cdot\rVert_{L^2}$& Conv. Rate & $\nu$ \\
			\hline
			\hline	
			10 &0.0754 & - &0.1326& -  & 0.5668  &0.0834 &-& 0.2007 &-& 0.2834 \\
			\hline
			50 &0.0151& 0.99 &0.0265& 1.00 & 0.5734 &0.0153 &1.05 & 0.0380 &1.03&  0.2867\\
			\hline
			100 &0.0075& 1.01 &0.0132& 1.00 &  0.5742 &0.0076 &1.01& 0.0189 & 1.01& 0.2871\\
			\hline
			200 & 0.0038& 0.99&0.0066&1.00&  0.5746 &0.0038 & 1.00 &0.0094  & 1.01&0.2873 \\
			\hline
			400 &0.0019&1.00&0.0033& 1.00 & 0.5748 &0.0019& 1.00 & 0.0047  & 1.00& 0.2874 \\
			\hline
			800 & 0.0009&1.05&0.0017&0.97 & 0.5749 & 0.0009&1.08 & 0.0023 & 1.03&  0.2874 \\
			\hline
		\end{tabular}
		\caption{Convergence of the decay rate $\nu$ for $\eta=\tilde{\eta}=0.575$ and first-order convergence of the discretization for a CFL constant equal to 1,~$N=\frac{1}{2h}$ and different velocities}
		\label{nu}
	\end{threeparttable}
\end{table}
The dependence of the discrete Lyapunov function on the parameter $\kappa$ is shown in Table \ref{frac}. Here, the ratio of the Lyapunov function at time $t=T$ and $t=0$ is small for small values of $\kappa$ and vice versa. The spatial step size $h=0.00125$ and ${v}_e={v}=1$ imply that the decay rate $\nu$ is quite close to  $\tilde{\eta}=\min(\eta,\tilde{\eta})$, see Table \ref{frac}.
\begin{table}[H]
	\centering
	\begin{tabular}{|C{1.5cm}|C{1.5cm}|C{1.5cm}|C{1.5cm}|C{1.5cm}|}
		\hline
		$\kappa$ & $\frac{V^T}{V^0}$ & $\eta$ & $\tilde{\eta}$ & $\nu$ \\
		\hline
		\hline	
		0.1 &$3.75e^{-60}$  & 4.6052    & 0.5752 & 0.5750 \\
		\hline
		0.25 & $1.40e^{-36}$ & 2.7726 & 0.5752 & 0.5750\\
		\hline
		0.5 & $1.05e^{-18}$ & 1.3863 & 0.5752 & 0.5750\\
		\hline
		0.75 &  $3.20e^{-8}$ &0.5752  & 0.5752 & 0.5750\\
		\hline
	\end{tabular}
	\caption{Dependence of the Lyapunov function on $\kappa$}
	\label{frac}
\end{table}

However, considering production systems with only decreasing queues does not seem reasonable. Therefore, we set $\mu_1=6$, $\mu_2=4$ and choose the initial conditions $f_1(0,x)=6$, $f_2(0,x)=4$ and $q_2(0)=0$, which lead to an increasing queue in front of processor 2. We set ${v}_1={v}_2=1$, $l=\frac{1}{2}$, $T=30$ and $\eta=\tilde{\eta}=0.2$. The Lyapunov function $V_1$ for the flux and $V_2$ for the queue are depicted in Figure \ref{img:2}. 
We observe exponential decay of the composed Lyapunov function $V=V_1+V_2$, see Figure \ref{img:1}. Moreover, the outflow and the queue of processor 2 are damped to zero, see Figure \ref{img:3} and \ref{img:4}. The linear feedback takes over as soon as the queue is equal to zero. The reason for the shape of the feedback becomes more clear in the following example.

\begin{figure}[H]
	\centering
	\begin{subfigure}[b]{0.49\textwidth}
		\centering
		\includegraphics[width=\textwidth]{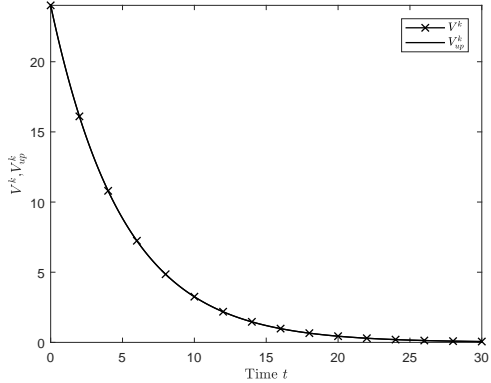}
		\caption[Network2]%
		{{Discrete Lyapunov function $V^k$ and upper bound $V_{up}^k$, see \eqref{eq:disclya1} and \eqref{eq:Vupdiscr}}}    
		\label{img:1}
	\end{subfigure}
	\hfill
	\begin{subfigure}[b]{0.49\textwidth}  
		\centering 
		\includegraphics[width=\textwidth]{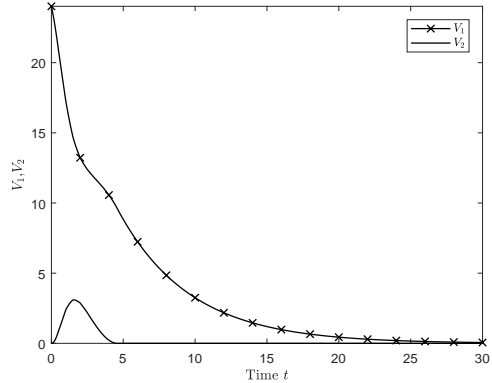}
		\caption[]%
		{{Lyapunov function for the flux $V_1$ and for the queue $V_2$, see \eqref{eq:lyap}}}    
		\label{img:2}
	\end{subfigure}
	\begin{subfigure}[b]{0.49\textwidth}   
		\centering 
		\includegraphics[width=\textwidth]{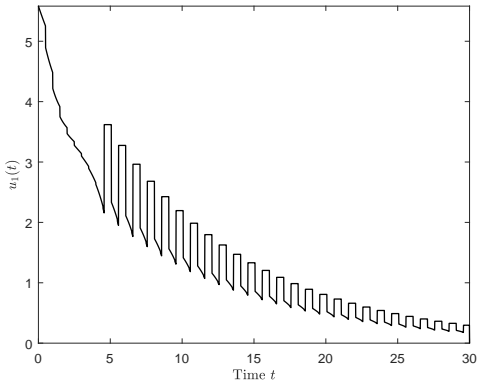}
		\caption[]%
		{{Feedback $u_1(t)$, see \eqref{eq:u1k}}}    
		\label{img:3}
	\end{subfigure}
\begin{subfigure}[b]{0.49\textwidth}   
	\centering 
	\includegraphics[width=\textwidth]{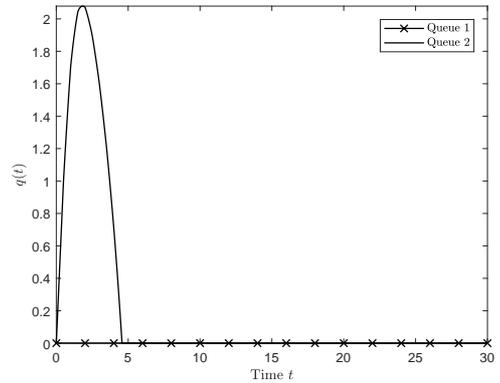}
	\caption[]%
	{{Queue network with an increasing queue 2}}    
	\label{img:4}
\end{subfigure}
\caption[ The average and standard deviation of critical parameters ]
{Production system with one increasing queue} 
\label{img:LyapincrSuplot}
\end{figure}

\newpage
As a final experiment, we investigate the behavior of the Lyapunov function for a larger gap between the two maximal capacities $\mu_1$ and $\mu_2$. Therefore, we set $\mu_2=4$ and vary the values for $\mu_1$. We choose the initial conditions for the flux equal to the maximal capacities and assume again $q_e(0)=0$. 
As in the previous example, we set $\eta=0.2$, $\tilde{\eta}=0.2$, which leads to $\kappa=e^{-\frac{1}{10}}\approx0.9048$. Regarding the linear feedback law, a higher $\mu_1$ leads to a bigger kink in the Lyapunov function as shown in Figure \ref{img:LyapincrSuplot1}. In comparison to that, we notice that the mixed feedback law drastically reduces the kink in the Lyapunov function and leads to the desired exponential decay. This shows that the mixed feedback law is even able to deal with higher queue loads. 

In Figure \ref{img:LyapincrSuplot11}, the feedback $u_1^k$ for both feedback laws is depicted. It should be noted that the mixed feedback $u_1^k$ is decreasing more slowly as soon as the queue load is sinking (i.e. $\mu_2>f_{1,N-1}^k$). The evolution of the queue is reflected in the feedback since it is part of the mixed feedback law, see \eqref{eq:u1k}. This behavior becomes even more significant for larger $\mu_1$ leading to a a higher queue load. 
Furthermore we remark that the mixed feedback slightly increases for all choices of $\mu_1$ as soon as the linear feedback takes over due to the queue load being equal to zero. Shortly after the increase, the linear feedback starts to decrease again since the flux in the first processor has been already damped below $\mu_2$.

Applying simply the linear feedback law leads to a constant feedback $u_1^k=\kappa f_{2,N-1}^k=4\kappa$ at the beginning due to the queue being nonzero. 
The constant behavior of $u_1^k$ remains maintained longer for larger $\mu_1$, also resulting in a bigger kink of the Lyapunov function. Finally, both feedback laws damp the outflow and the queue to zero.

\begin{figure}[htb!]
	\centering
	\begin{subfigure}[b]{1\textwidth}
		\centering
		\includegraphics[width=\textwidth]{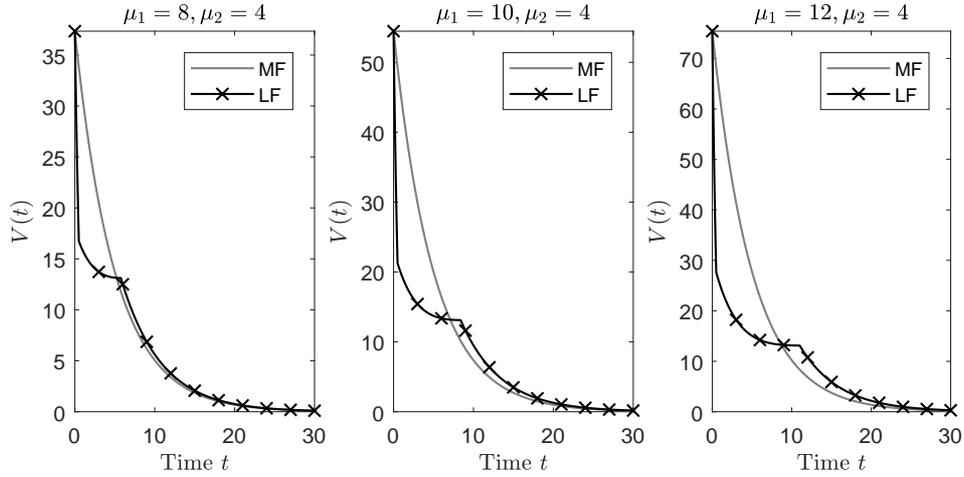}
		\caption{Lyapunov function for the MF and LF for different maximal capacities}
		\label{img:LyapincrSuplot1}
	\end{subfigure}
	\vskip\baselineskip
	\begin{subfigure}[b]{1\textwidth}
		\includegraphics[width=\textwidth]{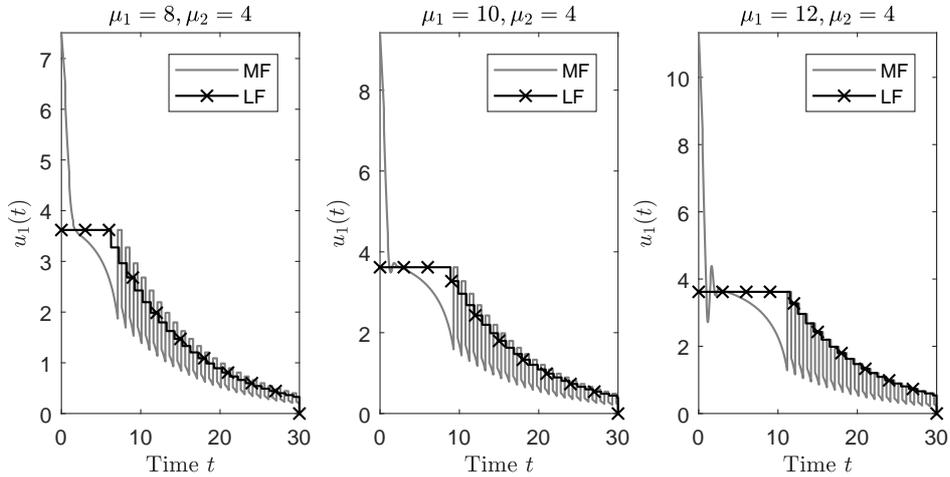}
		\caption{Feedback for the MF and LF for different maximal capacities}
		\label{img:LyapincrSuplot11}
	\end{subfigure}
	\caption{Lyapunov functions and feedback laws}
\end{figure}

\section{Conclusion}
The main topic of this paper is the continuous and discrete feedback stabilization 
for a coupled PDE-ODE system describing a serial production system. 
By using of an adapted Lyapunov function, we are able to prove exponential stability 
and derive a linear feedback law. We also observe that the linear feedback law leads to at most asymptotic stability. 
Based on the numerical approximation for the coupled PDE-ODE system, we can compute decay rates
and derive a mixed feedback law which is consistent with the analytical result but able to resolve queueing situations. 

\section*{Acknowledgments}
This work was financially supported by BMBF project ENets (05M18VMA) and the DFG grants No. GO 1920/4-1 and GO 1920/7-1.

\bibliography{quellen}
\bibliographystyle{siam}

\end{document}